\newtheorem{theorem}{Theorem}[section]
\newtheorem{lemma}[theorem]{Lemma}
\newtheorem{proposition}[theorem]{Proposition}
\newtheorem{corollary}[theorem]{Corollary}
\newtheorem{definition}{Definition}[section]
\begin{document}

\title[Large deviations and nonhyperbolic Aubry-Mather measures]{Large deviations and Aubry-Mather measures supported in nonhyperbolic closed geodesics}

%    Information for first author
\author{Artur O. Lopes}
%    Address of record for the research reported here
\address{Departamento de Matem\'atica, Universidade Federal do
Rio Grande do Sul, Porto Alegre, RS, Brazil}
%    Current address
%\curraddr{}
\email{arturoscar.lopes@gmail.com}
%    \thanks will become a 1st page footnote.
%\thanks{}

%    Information for second author
\author{Rafael O. Ruggiero}
\address{Departamento de Matem\'atica, Pontif\'{\i}cia Universidade Cat\'olica do Rio de Janeiro,
Rio de Janeiro, RJ, Brazil, 22453-900} \email{rorr@mat.puc-rio.br}
\thanks{The second author was partially supported by CNPq, Pronex de Geometria (Brazil), the \'{E}cole Normale Sup\'{e}rieure de Paris
and the Universit\'{e} de Nice. The first author was partially supported by  CNPq, PRONEX -- Sistemas
Din\^amicos, Instituto do Mil\^enio, INCT- IMPA,  Projeto Universal - CNPq and beneficiary of CAPES financial support.}

%    General info
%\subjclass{Primary %54C40, 14E20
%; Secondary %46E25, 20C20
%}

\date{February 12th, 2009}
\keywords{Geodesic flow, Aubry-Mather measure, large deviation,
twisted Brownian motion}

\begin{abstract}
We obtain a large deviation function for the stationary measures of
twisted Brownian motions associated to the Lagrangians
$L_{\lambda}(p,v)=\frac{1}{2}g_{p}(v,v)-
\lambda\omega_{p}(v)$,
where $g$ is a $C^{\infty}$ Riemannian metric in a compact surface
$(M,g)$ with nonpositive curvature, $\omega$ is a closed 1-form
such that the Aubry-Mather measure of the Lagrangian
$L(p,v)=\frac{1}{2}g_{p}(v,v)-\omega_{p}(v)$ has support in a
unique closed geodesic $\gamma$; and the curvature is negative at
every point of $M$ but at the points of $\gamma$ where it is zero.
We also assume that the Aubry set is equal to the Mather set. The
large deviation function is of polynomial type, the power of the
polynomial function depends on the way the curvature goes to zero
in a neighborhood of $\gamma$. This results has interesting
counterparts in one-dimensional dynamics with indifferent fixed
points and convex billiards with flat points in the boundary of
the billiard.  A  previous estimate by N. Anantharaman of the
large deviation function in terms of the Peierl's barrier of the
Aubry-Mather measure is crucial for our result.
\vspace{0.2cm}

AMS 37D40, 37C50, 37A50, 37D05, 53D25

\vspace{0.2cm}
To appear in DCDS A
\end{abstract}
\maketitle

\section{Introduction}

Large deviations of families of measures are important in many physical applications
where one would like to estimate observables at exceptional conditions using ''physical'' measures
in the phase space. A typical setting in dynamical systems would be to estimate
an invariant measure supported in a singular set with respect to the Lebesgue measure
(for instance, a measure minimizing some variational principle) in terms of a family of
absolutely continuous measures containing a sequence converging
to this singular measure. There is a vast literature on the subject in mathematical
physics, assuming in most of the cases a hyperbolicity condition for the dynamical system and/or dimension one for the
configuration space. The subject of the present article is to study large deviations in the
non-hyperbolic setting and higher dimensions with the help of weak KAM theory (see \cite{A1} \cite{A2} \cite{A3} \cite{AIPS} for related results).
We will consider a family of surfaces initially described in \cite{CR}
which have negative curvature everywhere up to the points along a closed geodesic where the curvature vanishes. Since the hyperbolicity of orbits arises from
non-parallel Jacobi fields, and the curvature vanishes along $\gamma$, the orbit
corresponding to $\gamma$ is not hyperbolic.

As a motivation for studying such problem we point out that there classes of problems (which do not present full hyperbolicity)
where a special orbit plays an important role. For transformations with a fixed indifferent point (like the Maneville-Pomeau map, see \cite{You}, \cite{Sa}) this point is associated with the phenomena of phase transition and polynomial decay of correlation. For special billiards, where a cusp point can make a trajectory stay for arbitrary long time close to this point, this is also associated with polynomial decay of correlation \cite{Mar}. The careful analysis of the evolution on time of these special trajectories determines singular characteristics of these special cases of dynamics. Here we analyze the phenomena
of large deviation associated to a special closed geodesic. The family of probabilities indexed by $\lambda$ which is considered here is associated to critical solutions of the Evans action (see \cite{Ev1} \cite{GLM2}), and, the limit, when $\lambda\to \infty$, is usually known as the semiclassical limit (see \cite{A3} \cite{Non}).

 One of the important results we get is the link between Peierls barrier and Busemann functions, which allow us to make accurate analytic estimates of the former from the geometry of the surface closed to the vanishing curvature geodesic $\gamma$. We point out that our work considers a pathological case, and some kind of analytical control over the lack of hyperbolicity is essential to get meaningful large deviation estimates. This is similar to the case of the investigation of the ergodic properties of dynamical systems with fixed indifferent points \cite{You} \cite{Sa} or billiards with cusps \cite{Mar}.

 We denote by $d$ the distance on the  manifold induced by the Riemannian metric.

From these estimations we get  the main result of the paper which is the following:
\bigskip

{\bf Theorem 1.}
 Let $(M,g)$ be a compact surface with $K\leq 0$ such that:
\begin{enumerate}
 \item There is a closed geodesic $\gamma$ where $K\equiv  0$
whose orbit supports the (unique) Aubry-Mather measure of
$L(p,v)=\frac{1}{2}g_{p}(v,v)-\omega_{p}(v)$. The Aubry set and
the Mather set of the Aubry-Mather measure coincide.
\item $K<0$ in the complement of $\gamma$.
\item There exists $m>0$ such that for every geodesic
$\beta: (-\epsilon, -\epsilon) \longrightarrow M$
 perpendicular to $\gamma$ at $\beta(0)=\beta \cap \gamma$
we have that $m$ is the least integer where
 $\frac{\partial^{m}}{dt^{m}} K(\beta(t))\vert_{t=0}\neq 0.$
\end{enumerate}
$\mu_{\lambda \omega}$ be the stationary measure for the
$\lambda\omega$-twisted Brownian motion, $\lambda >0$. Then
$\exists$ $D>0$ such that $\forall$ open   ball $A\subset M$
(which does not intersect the closed geodesic $\gamma$),
$$ -(1/D)\inf_{x\in A}d(x,\gamma)^{2+\frac{m}{2}}  \leq  \lim_{\lambda \rightarrow +\infty}\frac{1}{\lambda}\ln(\mu_{\lambda\omega}(A))$$
and
$$ \lim_{\lambda \rightarrow +\infty}\frac{1}{\lambda}\ln(\mu_{\lambda\omega}(\bar{A}))
  \leq -D\inf_{x\in \bar{A}}d(x,\gamma)^{2+\frac{m}{2}}.$$
\bigskip

To a given form $\omega$, corresponds by duality in Mather Theory, a homology class $[h]$
(see for instance \cite{CI} \cite{Man} \cite{Mat}). In our case this $[h]$ is the
homology of the curve $\gamma$. A brief account of some basic results of
Brownian motion and Aubry-Mather theory is made in the first three sections of the paper.
\bigskip

Several precise results about large deviations for hyperbolic dynamical systems
are known \cite{A2} \cite{AIPS}. In the case of a probability supported on a
proper invariant hyperbolic set, the large deviation function is typically a
linear function in the distance to the support of the measure;
its slope depending on the hyperbolicity of the system (this in the
context of Lagrangian dynamics on the torus see also \cite{A1}). The thermodynamical
formalism is the main tool used to get large deviation formulae in the
presence of hyperbolicity.
\bigskip

Let us give an outline of the proof of Theorem 1. N. Anantharaman in
\cite{A1} \cite{A2} and \cite{A3} considers the family of measures
mentioned in Theorem 1: stationary measures for the twisted Brownian
motions arising from twisting a Riemannian Laplacian by multiples of
a closed one form $\omega$. By the general theory of harmonic
analysis, such measures are absolutely continuous probabilities on
the configuration space (the manifold $M$). These probabilities
approximate the Aubry-Mather measure associated to the Lagrangian
given by the kinetic energy of a Riemannian metric plus the closed
form $\omega$. Under certain assumptions (uniqueness of the
Aubry-Mather measure), N. Anantharaman also considers a large
deviation principle for this family,  and exhibits a deviation
function which is given by the Peierl's barrier. This nice geometric
result tells roughly that the deviation function at a point $p$
depends on how far from being minimizers of the Lagrangian action
are closed loops based at $p$. We consider here an special example
of this setting where one can have a sharp control of the Peierl압
barrier in a neighborhood of a certain closed, non-hyperbolic
geodesic (the curve $\gamma$ in Theorem 1). In \cite{AIPS}, among
other things,  it is analyzed a similar problem for a hyperbolic
periodic trajectory  of the geodesic flow. General references for
the Aubry-Mather theory are \cite{Mat} \cite{Man} \cite{CI}
\cite{Fa}.
\bigskip

Our estimate of the Peierl압 barrier comes from sharp bounds for the
Busemann function (see \cite{Bus}) of the Riemmanian metric $(M,g)$ associated to lifts
of the geodesic $\gamma$ in the universal covering. These bounds are obtained by comparing
the metric $(M,g)$ in a neighborhood of $\gamma$ with an annulus of revolution (under the
assumptions of Theorem 1). Notice that the Peierl압 barrier is defined in terms of the
weak KAM solutions of the considered Lagrangian (see Sections 4 and subsequent). Therefore, one of the
main issues of the proof of Theorem 1 is to relate Busemann functions and Peierl압 barrier.
There is a natural generalization of Busemann functions to
convex, superlinear Lagrangians (see section 4.9 of \cite{CI} for instance) in the context
of weak KAM theory. Such generalized Busemann functions are used to exhibit fixed
points of the Lax-Oleinik operator (backward and forward)
with infinite critical value (in \cite{CI} \cite{Co1} these
functions are called Busemann weak KAM solutions). We would like to point out that
the Busemann functions we use are just the Riemannian ones, we do not need to apply
this general notion in our argument.
\bigskip

Let us make some comments about the assumption in Theorem 1 concerning the equivalence between
the Aubry-Mather set and the Mather set. This condition is of topological nature, as
observed in \cite{Mas} for surfaces. Particularly important for us are the results
of section 4 in \cite{Mas} where it is considered the case where the Mather set is a
single periodic orbit: if $\gamma$ separates $M$ (see case 1.2) then the two sets are equal.
In fact, the coincidence of the Aubry-Mather set and the Mather set is generic in homology
(see Theorem 3 in \cite{Mas}).
\bigskip

Let us finish the Introduction with some further remarks and problems.
First of all, notice that Theorem 1 describes how a family of absolutely continuous measures in the
{\bf configuration} space approaches a Dirac measure concentrated in the so-called
projected Mather set (see Section 3 for the definition). The terminology Mather measure may have
two meanings: the projected Mather measure (with support on the manifold)  and the one in the tangent bundle.
So Theorem 1 is  a large deviation
for the projected Mather measure. A more recent stream of ideas
give us some hints of how we can obtain a L. D. P. for the Mather measure on the tangent bundle.
The so-called entropy penalized method presented in \cite{GV} shows other
ways to obtain approximations of the Mather measure on the tangent bundle by
absolutely continuous probabilities. Under some conditions ($M=T^{n}$, convex superlinear
Lagrangians, uniqueness of the Aubry-Mather measure), a large deviation principle
for such procedure is described in \cite{GLM}. Another way to get a L. D. P.
for the Mather measure on the tangent bundle is via semi-classical limit of
Wigner functions \cite{GLM2}.  We believe that these procedures can also be
extended to our setting.
\bigskip

Finally, let us observe that the geodesic flows considered here are particular cases of expansive geodesic flows
in manifolds with non-positive curvature \cite{E} \cite{BGS} \cite{Ru1} \cite{Ru2} \cite{CR} \cite{LRR}.
The topological dynamics of expansive geodesic flows in manifolds without conjugate points is
well understood, it is about the same Anosov topological dynamics. Moreover, the universal
covering is a Gromov hyperbolic space. However, the ergodic theory of expansive geodesic
flows with non-positive curvature is almost the same ergodic theory of rank one manifolds.
The ergodicity of rank one manifolds is a very hard open problem, as well as for expansive
geodesic flows with non-positive curvature. The family of examples considered in Theorem 1
are perhaps the simplest non-Anosov geodesic flows of rank one, they are ergodic an even Bernoulli
by Pesin theory. So we could ask if it is possible to give a sharper description of the ergodic
properties of invariant measures in this case (Liouville measure, Gibbs measures).
In \cite{GN} the Holder class of the horocycle flow for the metrics considered in Theorem 1 is
presented. We believe that from these estimates, and some of the ideas described in the present paper,
one can detect the so called  concentration of measure phenomena, or even calculate the decay of correlation
of the Liouville measure. This will be the purpose of a future work.
\bigskip

The second author thanks the \'{E}cole Normale Sup\'{e}rieure de Paris where most of
this work was done during a sabbatic period of the author. Special thanks to Professor
Viviane Baladi who made possible the visit of the second author to the ENS Paris. Special
thanks too to the Mathematics Department of the Universit\'{e} de Nice,
where the second author developed part of this work when invited by Professor Ludovic Rifford.

%\section{Notions of Large deviations}
%\begin{frame}{Usual notions of Large deviations in dynamical systems}

%1. Given a measure preserving dynamical system $(T, X, \mu)$ with ergodic $\mu$,
%how does Birkhoff sums deviate from mean values.
%\bigskip

%3. Given a family $\mu_{\epsilon}$ of measures converging to a measure $\mu$ when
%$\epsilon \rightarrow 0$, how $\mu_{\epsilon}$ approaches $\mu$ (limits of viscosity
%measures to inviscid measures).
%\bigskip

%\begin{frame}{Examples: Thermodynamic formalism}

%i. Given a potential $f: X\longrightarrow {\mathbb R}$ how the
%Gibbs measures of $tf$ behave as $t\rightarrow \infty$. If $X$ is
%a shift of finite type, the Gibbs measure $\mu_{T}$ of a potential
%$L$ at temperature $\frac{1}{T}$ minimizes the functional
%\[ \int Ld\mu - \frac{1}{T}h_{\phi}(\mu), \]
%and $\lim_{T\rightarrow \infty}\mu_{T}=\mu_{\infty}$ minimizes the
%action and maximizes entropy. (Contreras-Lopes-Thieullien for
%shifts, Lopes-Thieullien for hyp. geodesic flows, N. Anantharaman
%for Lagrangians in ${\mathbb T}^ {n}$).
%\bigskip

%ii. Entropy-penalized method (T. Gomes, Gomes-Mohr-Lopes):
%minimize
%\[ \int_{T^{n}\times {\mathbb R}^ {n}}L(p,v)d\mu + \epsilon S_{\mu} \]
%where $S_{\mu}$ is the measure theoretic entropy. The stationary
%measures $\mu_{\epsilon}$ (with some constraints) converge to an
%Aubry-Mather measure generically (T. Gomes).

%\end{frame}
We refer the reader to \cite{DZ} for general properties of large deviations.

\section{Preliminaries about diffusion and Brownian motion}
\subsection{Diffusion and Brownian motion in a Riemannian manifold}

Let $(M,g)$ be a compact $C^{\infty}$ Riemannian manifold, let
$(\tilde{M},\tilde{g})$ be its universal covering endowed with the
pullback of $g$ by the covering map. Let $\Delta$ be the Laplace
operator of $(M,g)$, $\tilde{\Delta}$ be the lift of the Laplace
operator to $(\tilde{M},\tilde{g})$.

The operator $\Delta$ gives rise to a stochastic process in
$(M,g)$, the Brownian motion. It is linked to $\Delta$ in the
following way: given $x\in M$, let $C({\mathbb R}, M)$ be the
space of continuous paths, ${\mathbb P}_{x}$ be the Wiener
measure, let $X_{t}: \gamma \longrightarrow \gamma(t)$ be a
realization of the Brownian motion starting at $x$. Then
\[ P^{t}(f(x))= e^{\frac{1}{2}t\Delta}(f(x))= {\mathbb E}_{x}(f(X_{t})) .\]
for every $C^{\infty}$ function $f: M\longrightarrow {\mathbb R}$.
The operator $P^{t}$ is the {\bf Heat semigroup} of $(M,g)$, i.e.,
the solution of the heat equation $\frac{\partial u}{\partial t} =
\frac{1}{2}\Delta u$ in $(M,g)$.

We refer the reader to \cite{Du} or \cite{Str} for general results
on Brownian motion and diffusions.

\subsection{Twisting the Laplacian by closed 1-forms}

We refer the reader to \cite{A1} for general results on twisted Laplacians.

Let $\omega$ be a $C^{\infty}$ closed 1-form in $M$. The Laplacian
\textbf{twisted} by the 1-form $\omega$ is
\begin{eqnarray*}
 \Delta_{\omega}(f(x)) & = & \tilde{\Delta}_{\tilde{x}}(\tilde{f}(\tilde{x})) \\
& = &
e^{-\int_{p}^{\tilde{x}}\tilde{\omega}}\tilde{\Delta}(e^{\int_{p}^{\tilde{x}}\tilde{\omega}})\tilde{f}(x).
\end{eqnarray*}
where $p\in \tilde{M}$ is a base point, $\tilde{\omega}$ is any
lift of $\omega$ to $\tilde{M}$, $f$ is a $C^{\infty}$ function in
$M$ and $\tilde{f}$ is any lift of $f$. Twisted Laplace operators
are used to study asymptotic properties of the number of closed
orbits in a fixed homology class of geodesic flows of negative
curvature (via Selberg's trace formula) taking $Re(\omega)=0$. The
semigroup
\[ P_{\omega}^{t} = e^{\frac{1}{2}\Delta_{\omega}t} \]
gives the solution of the \textbf{twisted} heat equation $
\frac{\partial u}{\partial t} = \frac{1}{2}\Delta_{\omega} u$ for
the {\bf twisted Lagrangian} $ L_{\omega}(p,v) =
\frac{1}{2}g_{p}(v,v) - \omega(v) $.

The twisted Laplacian  appears in a natural way when we want to consider the Schrodinger operator
for a Mechanical Lagrangian to which we add a closed form (the magnetic term).

\subsection{Stationary probability for the twisted Brownian motion}

The operator $P^{t}_{\omega}$ acts on the space of measures by
$(P^{t}_{\omega})^{*}$:
\[ \int_{M} fd(P^{t}_{\omega})^{*}\mu) = \int_{M} P_{\omega}^ {t}f d \mu .\]
The action preserves positive measures and there exist
$\Lambda(\omega)$ and a measure $\mu_{\omega}$ such that
\[ (P^{t}_{\omega})^{*}\mu_{\omega} = e^{\Lambda(\omega)t}\mu_{\omega} .\]
Let $h_{\omega}(x) = \int_{M}K^{1}_{-\omega}(y,x)d\mu_{-\omega}$,
where $K^{t}_{\omega}$ is the Kernel of $P^{t}_{\omega}$.
\bigskip

\begin{theorem} There exists a (unique up to normalization) measure
$\nu_{\omega}= f_{\omega}dx$ that is a fixed point of the twisted
Brownian motion
\[ Q^{t}f(x) = e^{-t\Lambda(\omega)}h_{\omega}(x)^{-1}P^{t}_{\omega}(h_{-\omega}f)(x) .\]
\end{theorem}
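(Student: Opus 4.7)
The plan is to deduce the result from the Krein--Rutman theorem applied to the compact, strongly positive operator $P^t_\omega$ on $C(M)$, together with the formal duality $(P^t_\omega)^* = P^t_{-\omega}$ in $L^2(M,dV)$. A short integration by parts on the compact manifold yields $\Delta_\omega^* = \Delta_{-\omega}$ (the cross terms involving the divergence $\delta\omega$ cancel correctly), hence $(P^t_\omega)^* = P^t_{-\omega}$ and the kernels satisfy $K^t_\omega(x,y) = K^t_{-\omega}(y,x)$. Since the heat kernel $K^t_\omega$ is smooth and strictly positive on the compact $M$, $P^t_\omega$ is Hilbert--Schmidt and strongly positive; Krein--Rutman yields a simple principal eigenvalue $e^{\Lambda(\omega)t}$ with a one-dimensional cone of positive right eigenfunctions and, via the adjoint, a unique (up to scale) positive eigenmeasure $\mu_\omega = \rho_\omega\,dV$ for $(P^t_\omega)^*$. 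In particular $\Lambda(\omega) = \Lambda(-\omega)$, and the rest of the spectrum lies in a strictly smaller disk.

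Step two is to recognise $h_\omega$ as a principal right eigenfunction of $P^t_\omega$. Using $K^1_{-\omega}(y,x) = K^1_\omega(x,y)$, the definition rewrites as $h_\omega(x) = (P^1_\omega \rho_{-\omega})(x)$; the duality translates $(P^t_\omega)^*\mu_\omega = e^{\Lambda(\omega)t}\mu_\omega$ into $P^t_{-\omega}\rho_\omega = e^{\Lambda(\omega)t}\rho_\omega$, and the symmetric statement gives $P^t_\omega \rho_{-\omega} = e^{\Lambda(\omega)t}\rho_{-\omega}$. Hence $h_\omega = e^{\Lambda(\omega)}\rho_{-\omega}$ is proportional to the principal right eigenfunction of $P^t_\omega$, and symmetrically $h_{-\omega}$ is proportional to the principal right eigenfunction of $P^t_{-\omega}$. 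Reading the displayed formula as the standard Doob $h$-transform (so that $P^t_\omega h_\omega = e^{\Lambda(\omega)t}h_\omega$ yields $Q^t 1 = 1$) identifies $Q^t$ as a Markov semigroup. The natural invariant density is then $f_\omega = c\,h_\omega h_{-\omega}$, normalized to a probability; invariance $(Q^t)^*\nu_\omega = \nu_\omega$ reduces, after one application of the duality inside the integral, to $P^t_{-\omega} h_{-\omega} = e^{\Lambda(\omega)t}h_{-\omega}$, which was already established. Uniqueness follows from the simplicity of the principal eigenspace of $P^t_{-\omega}$: writing any other invariant density as $\tilde f_\omega = h_\omega g$, invariance forces $g$ to be a positive right eigenfunction of $P^t_{-\omega}$ with eigenvalue $e^{\Lambda(\omega)t}$, so $g \propto h_{-\omega}$ and $\tilde f_\omega \propto h_\omega h_{-\omega}$.

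The algebraic manipulations above are short once the spectral picture is in hand, so the main obstacle is the setup rather than the conclusion: one must establish compactness and strict positivity of $P^t_\omega$ on an appropriate function space and, crucially, the simplicity of the principal eigenvalue despite the non-self-adjointness of $\Delta_\omega$. On a compact Riemannian manifold this follows from smoothness and positivity of the heat kernel together with a Jentzsch-type argument for positive irreducible operators, but care is needed because the usual spectral theorems are phrased for self-adjoint operators. A secondary subtlety is the interpretation of the displayed formula: as literally written, with $h_{-\omega}$ inside and $h_\omega$ outside, $Q^t 1 = 1$ does not hold in general, so one reads the formula as the standard $h$-transform with $h = h_\omega$ (likely a typographical slip) for the proof to go through as described.
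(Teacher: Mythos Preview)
The paper does not prove this theorem. It is stated in the preliminaries section as a known background fact, with the reader referred to \cite{A1} (Anantharaman) for the details on twisted Laplacians; no argument is given in the text beyond the statement itself. So there is no ``paper's own proof'' to compare against.

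That said, your approach is the standard one and is essentially correct. The Krein--Rutman / Perron--Frobenius machinery applied to the strictly positive, smoothing operator $P^t_\omega$ on $C(M)$ is exactly how one obtains the simple leading eigenvalue $e^{\Lambda(\omega)t}$, the positive eigenfunction, and the positive eigenmeasure; the duality $(P^t_\omega)^* = P^t_{-\omega}$ with respect to $dV$ (coming from $\Delta_\omega^* = \Delta_{-\omega}$) is the key structural input that links the $\omega$ and $-\omega$ data and gives $\Lambda(\omega)=\Lambda(-\omega)$. Your identification of $h_\omega$ with (a multiple of) the principal right eigenfunction of $P^t_\omega$ is correct, and the Doob $h$-transform then makes $Q^t$ Markov with invariant density proportional to the product of the left and right principal eigenfunctions; simplicity of the principal eigenvalue yields uniqueness.

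Your observation about the displayed formula is also well taken: as literally printed, with $h_{-\omega}$ inside $P^t_\omega$ and $h_\omega^{-1}$ outside, one does not get $Q^t 1 = 1$ unless $h_\omega$ and $h_{-\omega}$ happen to coincide. Reading it as the standard $h$-transform (same $h$ inside and out) is the natural correction, and is consistent with the formulas in Anantharaman's papers to which the authors defer.
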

The measure $\nu_{\omega}$ will be called the stationary measure
of the twisted Brownian motion.

\section{Preliminaries about Aubry-Mather measures}
\subsection{Aubry-Mather measures}
Consider $M$ a compact $C^{\infty}$ Riemannian manifold.
Let $L:TM\to {\mathbb R}$ be a $C^{\infty}$ convex,
superlinear Lagrangian. The action of $L$ in an absolutely
continuous curve $c:I\to M$ is $A_{L}(c)=
\int_{I}L(c(t),c'(t))dt$.

Let $\mathcal{M}(L)$ be the set of
invariant probability measures of the E-L flow of $L$.
The {\bf action} of $L$ in $\mathcal{M}(L)$ is defined by
\[ A_{L}(\mu) = \int Ld\mu .\]
The {\bf homology class} (Mather, Ma\~{n}\'{e}) $\rho(\mu)$ of the
measure $\mu$ is given by
\[  <\rho(\mu), \omega> = \int \omega d\mu ,\]
where $\omega$ is a closed 1-form. (Recall that the homology group
$H_{1}(M,{\mathbb R})$ is the dual of the cohomology group
$H^{1}(M,{\mathbb R})$).

A measure $\mu \in \mathcal{M}(L)$ is called \textbf{minimizing in
its homology class} if
\[ A_{L}(\mu)= \inf\{ A_{L}(\nu), \rho(\nu)=\rho(\mu)\}. \]

An \textbf{Aubry-Mather measure} $\mu$ is defined by
\[ A_{L}(\mu)= \inf\{A_{L}(\nu), \nu \in \mathcal{M}(L)\} .\]

The union of the supports of all Aubry-Mather measures is called
the Mather set for $L$.

\begin{theorem}\label{graphminimizing} (Mather, Ma\~{n}\'{e}): The support of a
minimizing measure is a Lipschitz graph over an invariant set of
global minimizers of the action.
\end{theorem}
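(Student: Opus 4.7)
The plan is to establish, in three stages: (i) that $\mu$-almost every point of the support lies on a globally action-minimizing orbit of the Euler-Lagrange flow $\phi^L_t$; (ii) that the canonical projection $\pi:TM\to M$ restricted to $\mathrm{supp}(\mu)$ is injective; and (iii) that its inverse is Lipschitz. By ergodic decomposition it suffices to assume $\mu$ ergodic. Applying Birkhoff's theorem to $L$ along $\phi^L_t$, $\mu$-almost every $(x,v)$ generates a trajectory whose restrictions to bounded intervals are global minimizers between their endpoints in the appropriate cohomology class. This semistatic property is closed under uniform limits of trajectories by Tonelli lower semicontinuity of the action, so $\mathrm{supp}(\mu)$ is contained in a closed, $\phi^L_t$-invariant set $\Sigma$ of global minimizers.

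For the injectivity of $\pi|_{\Sigma}$, I would use the classical crossing argument. If $(x,v_1),(x,v_2)\in\Sigma$ with $v_1\ne v_2$, let $c_i(t)=\pi\circ\phi^L_t(x,v_i)$. Splice $c_1|_{[-T,0]}$ with $c_2|_{[0,T]}$ and round off the corner at $x$ by a short $C^1$ bridge; strict convexity of $L$ in the fiber variable makes the action of the spliced curve strictly smaller than $A_L(c_1|_{[-T,0]})+A_L(c_2|_{[0,T]})$, contradicting the minimality of each $c_i$ on its time interval.

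The Lipschitz conclusion (iii) is the technical heart of the theorem and is obtained by making the previous crossing argument quantitative. For $(x_i,v_i)\in\Sigma$, $i=1,2$, extend both trajectories to $[-T,T]$ and form two competitor curves by swapping $c_1$ and $c_2$ and interpolating with short bridges of length $O(d(x_1,x_2))$ at both endpoints. Strict convexity of $L$ in $v$, together with $C^2$-smoothness, yields a quadratic lower bound of the form $c\,\|v_1-v_2\|^2$ on the action savings produced by the corner swaps, while the additional cost of the bridges is bounded above by $C\,d(x_1,x_2)\,\|v_1-v_2\|$ up to higher order terms. Combining these with the fact that each $c_i$ was already action-minimizing between its endpoints forces $\|v_1-v_2\|\le K\,d(x_1,x_2)$, which is the desired Lipschitz estimate.

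The main obstacle will be making the quantitative splicing rigorous on a manifold: one must parallel-transport velocities across the small distance $d(x_1,x_2)$, control the smoothness of the corner rounding uniformly in $(x_1,x_2)$, and verify that the time horizon $T$ can be chosen independently of the pair provided they are close enough. Working in normal coordinates centered at $x_1$, using Tonelli's a priori bounds on the speed of minimizers, and exploiting the smooth dependence of the Euler-Lagrange flow on initial conditions, these steps are standard but delicate; the essential analytic input throughout is the strict convexity of $L$ in the velocity variable, which is what converts the geometric injectivity of $\pi|_{\Sigma}$ into a quantitative Lipschitz control.
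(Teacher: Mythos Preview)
The paper does not prove this theorem; it is stated in the preliminaries as a known result due to Mather and Ma\~{n}\'{e}, with no proof given. So there is no ``paper's own proof'' to compare your proposal against.

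That said, your outline is the standard route to Mather's graph theorem as found in the original sources and in the expositions of Fathi and of Contreras--Iturriaga: (i) support consists of semistatic orbits via Birkhoff plus lower semicontinuity of the action, (ii) injectivity of $\pi$ by the corner-shortening (crossing) lemma, (iii) the Lipschitz bound by the quantitative version of that lemma exploiting strict convexity in $v$. One point to tighten: in step (iii) the usual argument does not literally swap the two curves and bridge at both endpoints, but rather compares the sum $A_L(c_1|_{[-T,T]}) + A_L(c_2|_{[-T,T]})$ with the sum of the actions of two crossed curves obtained by a single swap at time $0$; the action saving from the corner at $t=0$ is of order $\|v_1-v_2\|^2$ while the bridging cost at the endpoints $\pm T$ is controlled by a priori compactness of minimizers and can be made $O(d(x_1,x_2))$. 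Your formulation is close to this but the phrase ``swapping $c_1$ and $c_2$ and interpolating with short bridges of length $O(d(x_1,x_2))$ at both endpoints'' is slightly off: the bridges at $\pm T$ connect $c_1(\pm T)$ to $c_2(\pm T)$, and their length is controlled not by $d(x_1,x_2)$ directly but by the a priori Lipschitz bound on minimizers times $d(x_1,x_2)$ after choosing $T$ appropriately. This is a matter of bookkeeping rather than a genuine gap.
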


\subsection{Critical energy values and minimizing measures}

Both globally minimizing measures in homology and Aubry-Mather measures arise as minimum (and hence critical)
points of the Lagrangian action on holonomic invariant measures. Moreover, the support of an ergodic invariant
measure has constant energy, and the support of globally minimizing measures are minimizing orbits of the
Euler-Lagrange flow by Theorem \ref{graphminimizing}. So it is natural to expect that the energy levels of the
supports of Aubry-Mather measures have critical properties somehow.

\begin{definition}
The critical value $c(L)$ of $L$ (see section 2.1 \cite{CI}) is defined by
\[ c(L) = \sup_{k\in {\mathbb R}}\{ A_{L+k}(\beta)<0 \mbox{ }\textit{for some} \mbox{ }\textit{closed}\mbox{ }\textit{curve}\mbox{ }\beta\} .\]
\end{definition}

An holonomic invariant measure $\mu$ is called \textbf{globally minimizing} if $A_{L}(\mu) = -c(L)$.

\begin{definition}
The strict critical value $c_{0}(L)$ of $L$ (see page 798 \cite{CIPP}) is given by
\[ c_{0}(L)= \min_{\alpha \in H^{1}(M,{\mathbb R})}c(L-\alpha) = -\beta(0),\]
where $\beta: H_{1}(M,{\mathbb R}) \longrightarrow {\mathbb R}$ is
\[ \beta(h) = \min_{\rho(\mu)=h}A_{L}(\mu). \]
\end{definition}

Notice that $c_{0}(L) \geq c(L)$. The strict critical level is the
relevant one regarding Aubry-Mather measures.

\begin{theorem} \cite{Car}: The support of an Aubry-Mather
measure is contained in the energy level $E=c_0(L)$.
\end{theorem}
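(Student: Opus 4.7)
The plan is to establish Carneiro's theorem in three stages: (i) reduce to ergodic Aubry--Mather measures, (ii) show that on the support of an ergodic component the Hamiltonian energy is constant, and (iii) identify that constant with $c_{0}(L)$ by a velocity-rescaling variational argument.

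\emph{Stage 1: Ergodic reduction.} Decompose $\mu$ into its ergodic components $\{\mu_{\xi}\}$. Each $\mu_{\xi}$ is itself an Euler--Lagrange invariant probability, and the identity
\[ A_{L}(\mu)=\int A_{L}(\mu_{\xi})\,d\xi=\inf_{\nu\in\mathcal{M}(L)}A_{L}(\nu) \]
forces $A_{L}(\mu_{\xi})=\inf_{\nu} A_{L}(\nu)$ for $\xi$-almost every component; otherwise, replacing a positive-mass family of non-minimizing components by any invariant minimizer would strictly lower $A_{L}(\mu)$, contradicting global minimality. Hence it suffices to treat ergodic Aubry--Mather measures.

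\emph{Stage 2: Constant energy on the support.} The Hamiltonian energy $E(x,v)=\partial_{v}L(x,v)\cdot v-L(x,v)$ is a first integral of the Euler--Lagrange flow and is therefore constant along each orbit. For an ergodic invariant measure $\mu$ it must be $\mu$-a.e.\ equal to a single value $e_{0}$. Since $E$ is continuous and $\mu$-generic forward orbits are dense in $\mathrm{supp}(\mu)$ (Poincar\'e recurrence together with invariance of the support), one concludes $E\equiv e_{0}$ on $\mathrm{supp}(\mu)$.

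\emph{Stage 3: Identification $e_{0}=c_{0}(L)$.} For $s>0$ set $\phi_{s}(x,v)=(x,sv)$ and $\mu_{s}=(\phi_{s})_{*}\mu$. Each $\mu_{s}$ is holonomic, for any $C^{1}$ function $f:M\to \R$ satisfies
\[ \int df\,d\mu_{s}=\int df_{x}(sv)\,d\mu=s\int df\,d\mu=0. \]
Since $\mu=\mu_{1}$ is globally action-minimizing among holonomic measures with $A_{L}(\mu)=-c_{0}(L)$, the function $s\mapsto A_{L}(\mu_{s})$ attains its minimum at $s=1$, so $\frac{d}{ds}\big|_{s=1}A_{L}(\mu_{s})=0$. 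Writing $A_{L}(\mu_{s})=\int L(x,sv)\,d\mu$ and differentiating at $s=1$ gives the Euler-type identity
\[ 0=\int\partial_{v}L(x,v)\cdot v\,d\mu=\int\bigl(E(x,v)+L(x,v)\bigr)\,d\mu=\int E\,d\mu+A_{L}(\mu), \]
so $\int E\,d\mu=-A_{L}(\mu)=c_{0}(L)$. Combined with $E\equiv e_{0}$ on $\mathrm{supp}(\mu)$, this yields $e_{0}=c_{0}(L)$.

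\textbf{Main obstacle.} The principal difficulty is in stage 3. One must (a) show the velocity-rescaled measures remain in the correct comparison class (holonomic, not merely invariant), so that the minimality of $\mu$ really forces $s=1$ to be critical, and (b) relate the first-order criticality condition to an identity involving both $A_{L}(\mu)$ and $\int E\,d\mu$. In the general setting where the Aubry--Mather measure does not automatically have zero homology, one must first twist by a cohomology class $\omega_{0}$ realizing $c_{0}(L)=\min_{\alpha}c(L-\alpha)$ --- selected via the Fenchel--Legendre duality between Mather's $\alpha$ and $\beta$ functions --- and then run the rescaling argument for $L-\omega_{0}$; the holonomy computation above still applies and delivers the same Euler identity, now with the constant $c_{0}(L)$ in place of $c(L-\omega_{0})$.
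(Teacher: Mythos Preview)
The paper does not give its own proof of this statement; it is quoted as Carneiro's theorem with the citation [Car] and nothing more. So there is no proof in the paper to compare against. Your argument is in fact Carneiro's original strategy: reduce to ergodic components, use that the energy is a continuous first integral to get $E\equiv e_0$ on the support, and then run the velocity-rescaling $\mu_s=(\phi_s)_*\mu$ to extract the Euler-type identity $\int E\,d\mu=-A_L(\mu)$ from first-order optimality at $s=1$.

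Two points deserve tightening. First, in Stage~3 you use that $\mu$ minimizes $A_L$ over all \emph{holonomic} measures, whereas the paper's definition of an Aubry--Mather measure is minimality over \emph{invariant} measures only. That the two infima coincide (and hence that $\mu$ is also a holonomic minimizer) is Ma\~n\'e's theorem and should be cited; without it the criticality at $s=1$ is unjustified, since the $\mu_s$ are holonomic but not invariant for $s\neq 1$. Second, differentiating $s\mapsto\int L(x,sv)\,d\mu$ under the integral sign needs a domination hypothesis; the cleanest route is to note that minimizing measures have compact support (e.g.\ by Mather's graph theorem over the compact base $M$), after which the interchange is routine.

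A remark on the constant. Your Euler identity yields $\int E\,d\mu=-A_L(\mu)$, and for a globally minimizing measure the right-hand side is the Ma\~n\'e critical value $c(L)=\alpha(0)$, not $c_0(L)=\min_\alpha c(L-\alpha)$ as the paper (and you, following it) write. The paper's own conventions here are internally inconsistent---it defines $c_0(L)=\min_\alpha c(L-\alpha)$ yet asserts $c_0(L)\geq c(L)$, and computes the ``strict critical value'' of $\tfrac12|v|^2-\omega$ to be $\tfrac12\|\omega\|_s^2$, which is actually $c(L)$. Your proposed fix via twisting by $\omega_0$ does not help: minimizers of $A_{L-\omega_0}$ are generally different measures from minimizers of $A_L$, so the twist changes the object under study rather than repairing the constant.
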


There are many equivalent geometric characterizations of the strict critical level, it is, for instance,
the infimum of the energy levels containing a globally minimizing
orbit of the Lagrangian action with nontrivial (real) homology class. This is why the strict critical value is
the critical value of the lift of the Lagrangian action to the abelian cover of the manifold (see \cite{Fa}, for instance).

Let us give some examples. The critical value of geodesic flows is clearly $0$ while the strict critical value
is nonzero, if and only if, the first homology group of the manifold is nontrivial. The critical value
of a mechanical Lagrangian is the opposite value of the maximum of the potential, and the
Euler-Lagrange flow in energy levels above this value can be reparametrized to give the geodesic flow of a Riemannian
metric (Maupertuis' principle). The strict critical value of $L(p,v)=\frac{1}{2}g_{p}(v,v)-\omega_{p}(v)$, where $\omega$
is a closed 1-form, is $\frac{1}{2}\parallel \omega \parallel_{s}^ {2}$, where $\parallel
\omega \parallel_{s}$ is the stable norm. In particular, under our hypothesis, the Aubry-Mather measure of
$L(p,v)=\frac{1}{2}g_{p}(v,v)-\omega_{p}(v)$
is supported in the closed orbit $(\gamma(t),\gamma' (t))$, $t \in [0,Per(\gamma)]$, and the form $\omega$
is dual to the homology class of $\gamma$. So the stable norm of $\omega$ is just the period of $\gamma$.

\section{Large Deviations and Weak KAM theory}

\subsection{Large Deviations of stationary measures and Peierl's barrier}

In this section we state the main tool we use to obtain the
deviation function for the stationary measures of Brownian motions
twisted by multiples of a close 1-form. Let us start with some
basic analytic definitions in the context of weak KAM theory. Our main
references are \cite{Fa} \cite{CI}. Through the section, $M$ will
be a compact $C^{\infty}$ manifold, and $L:TM\longrightarrow
{\mathbb R}$ will be a $C^{\infty}$ convex, superlinear
Lagrangian.

\begin{definition} {\bf The Lax-Oleinik operators $T^{-}_{t}$, $T^{+}_{t}$}.
Given a continuous function $f:M\to(-\infty, \infty)$,
and $t>0$, define the function $T^{-}_{t}(f): M\to
{\mathbb R}$ by
$$ T^{-}_{t}(f)(x)= \inf_{\gamma}[ f(\gamma(0)) +
\int_{0}^{t}L(\gamma(t),\gamma'(t)) dt ],$$ where
$\gamma:[0,t]\to M$ is an absolutely continuous curve
with $\gamma(t)=x$. Let
$$T^{+}_{t}(f)(x) = \sup_{\gamma}[f(\gamma(t)) -
\int_{0}^{t}L(\gamma(t),\gamma'(t)) dt ],$$ where
$\gamma:[0,t]\to M$ is an absolutely continuous curve
with $\gamma(0)=x$.
\end{definition}

The Lax-Oleinik operators $T^{-}_{t}$ form a continuous time semigroup family of
operators, as well as the operators $T^{+}_{t}$. They enjoy very
nice properties (see \cite{Fa1}), in particular, the family of
$T^{-}_{t} -c_{0}(L)t$ has a fixed point $u^{-}$ which is a viscosity
solution of the Hamilton-Jacobi equation (see Definition 7.2.3 and Proposition 7.2.7 in \cite{Fa} and also \cite{CI}).

It is also true that  $u^{-}$ is a Lipschitz function (see Theorem 4.4.6 and Corollary 4.4.13 \cite{Fa}), and Lebesgue almost everywhere we have that
$$H(x,d_{x}u^{-}) = c_{0}(L) .$$

The function $u^{-}$ is  is differentiable along the projection
$\mathcal{M}_{0}$ of the Mather set.  Analogously, the family of
operators $T^{+}_{t} +c_{0}(L)t$ has a fixed point $u^{+}$ which is
Lipschitz and a weak (in the above sense) solution of  a
Hamilton-Jacobi equation (the so called conjugated Hamilton-Jacobi
equation) \cite{Fa}. Let $S^{-}$ be the set of fixed points of
$T^{-}_{t} -c_{0}(L)t$, and let $S^{+}$ be the set of fixed points
of $T^{+}_{t} -c_{0}(L)t$.

\begin{definition}
A pair of functions $u^{-} \in S^{-}$, $u^{+} \in S^{+}$ is called
a conjugate pair if $u^{-}(x)= u^{+}(x)$ for every $x \in \mathcal{M}_{0}$.
\end{definition}

According to Fathi \cite{Fa1} \cite{Fa}, for each weak solution
$u^{-}$ of the Hamilton-Jacobi equation there exists a unique
solution $u^{+}$ such that $u^{-}$, $u{+}$ form a conjugate pair.
We have that the differences $u^{-}-u^{+}$ of conjugate pairs are
always nonnegative, and they vanish at the projected Mather set.
One of the most interesting questions in weak KAM theory is wether
the set of zeroes of the differences of conjugate pairs coincides
with the projected Mather set. In this case, we can characterize
the projected Mather set (and hence the set of global minimizers
of the action) as the set of {\it true} critical points of the difference of
two $C^{1}$-{\it smooth} sub-solutions of the Hamilton-Jacobi equation
\cite{FaSi}. This property is very useful for applications, and motivates the key
idea of the proof of our main theorem.

So it looks very tempting to try to characterize analytically the projected Ma\~{n}\'{e} set
in terms of the differences of conjugate pairs. However, we have to be careful in this point.

\begin{definition} The second Peierl's barrier is

\[ P(x,x)= \inf\{u^{+}(x)-u^{-}(x) \} \]
where the infimum is taken over all conjugate pairs $u^{+}$,
$u^{-}$.
\end{definition}

\begin{definition}
As in \cite{Fa}, we denote the projected Aubry set by
$\pi(\hat{\Sigma} (L))$, the intersection of the set of
zeroes of all conjugate pairs of the Hamilton-Jacobi equation.
\end{definition}

The projected Aubry set is the canonical projection of certain orbits of the Euler-Lagrange flow
(see \cite{Fa} for instance), whose union is called the Aubry set $\hat{\Sigma}(L)$. Moreover,
the canonical projection $\pi:  \hat{\Sigma}(L) \longrightarrow \pi(\hat{\Sigma}(L))$ is a Lipschitz
homeomorphism (see \cite{Fa}, this is a version of the well known Mather's graph Theorem).

\begin{lemma}
The projected Aubry set contains $M_{0}$.
\end{lemma}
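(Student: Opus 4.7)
The plan is to unwind the definitions that have just been introduced; the lemma follows essentially tautologically from them. I would start by fixing an arbitrary conjugate pair $(u^{-},u^{+})$ with $u^{-}\in S^{-}$ and $u^{+}\in S^{+}$. By the defining property of conjugate pairs stated just above, $u^{-}(x)=u^{+}(x)$ for every $x\in \mathcal{M}_{0}$, so the (nonnegative) difference $u^{+}-u^{-}$ vanishes identically on the projected Mather set.

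Next I would take the infimum over all conjugate pairs to pass from individual differences to the second Peierl's barrier. For $x\in \mathcal{M}_{0}$,
$$P(x,x)=\inf_{(u^{-},u^{+})}\{u^{+}(x)-u^{-}(x)\}=0,$$
since each term in the infimum is already zero at such a point. In particular, $x$ lies in the zero set of $u^{+}-u^{-}$ for every conjugate pair, and therefore $x$ lies in the intersection of these zero sets. By the characterization of the projected Aubry set recalled just before the lemma (that $\pi(\hat{\Sigma}(L))$ is precisely this intersection, equivalently the locus where $P(\cdot,\cdot)$ vanishes), this gives $\mathcal{M}_{0}\subseteq \pi(\hat{\Sigma}(L))$.

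Because the argument is a direct consequence of the definitions of "conjugate pair", of $P(x,x)$, and of the projected Aubry set, there is no real obstacle. The one bookkeeping point worth double checking is the sign convention: one must verify that the paper's statement that differences of conjugate pairs are nonnegative is compatible with the definition $P(x,x)=\inf\{u^{+}(x)-u^{-}(x)\}$, so that "zero of the difference" and "vanishing of the Peierl's barrier" refer to the same set. This is a standard point in weak KAM theory and can be invoked from Fathi's book \cite{Fa} if needed.
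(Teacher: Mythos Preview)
The paper does not actually supply a proof of this lemma; it is stated as a known fact from weak KAM theory, with the surrounding discussion and the references to Fathi \cite{Fa}, \cite{Fa1} serving as justification. Your argument is correct and is exactly the tautological unwinding one would give from the definitions the paper has just set up: conjugate pairs agree on $\mathcal{M}_{0}$ by definition, hence $\mathcal{M}_{0}$ lies in the zero set of every difference $u^{-}-u^{+}$, hence in their intersection, which is by definition the projected Aubry set. The passage through $P(x,x)=0$ is not strictly necessary (the intersection-of-zero-sets characterization already gives the conclusion), but it does no harm.

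Your closing remark about the sign convention is well taken: the paper writes that $u^{-}-u^{+}\geq 0$ while defining $P(x,x)=\inf\{u^{+}(x)-u^{-}(x)\}$, which is an internal inconsistency in the exposition. Since your argument only uses that the difference \emph{vanishes} on $\mathcal{M}_{0}$, the sign plays no role and the proof goes through regardless of which convention is intended.
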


However, the inclusion of $M_{0}$ in $\pi(\hat{\Sigma} (L))$ might be strict: this is often the case when
there exist globally minimizing connections between different non-wandering components of the Mather set.
The projected Aubry set has the analytic characterization we would like to have for the Mather set
(see \cite{Fa}, \cite{Fa1}, \cite{A2} \cite{A3} and section 3.7 in \cite{CI}).

\begin{proposition}
The second Peierl's  barrier $P(x,x)$  is zero, if and only if, $x$ is in the projected Aubry set.
\end{proposition}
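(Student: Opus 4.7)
The proof naturally splits into two implications, and my plan is to dispatch the easy direction first and then identify the ``second'' Peierl's barrier $P(x,x)$ with the classical Mañé--Peierl's barrier
$$ h(x,x)=\liminf_{t\to\infty} h_{t}(x,x),\qquad h_{t}(x,y)=\inf_{\gamma(0)=x,\,\gamma(t)=y}\int_{0}^{t}\bigl(L(\gamma(\tau),\gamma'(\tau))+c_{0}(L)\bigr)d\tau, $$
and then invoke the classical Mañé--Fathi characterization of the projected Aubry set, namely $x\in\pi(\hat{\Sigma}(L))\iff h(x,x)=0$ (see \cite{Fa}, \cite{CI}).

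For the $(\Leftarrow)$ direction the argument is purely definitional: if $x\in\pi(\hat{\Sigma}(L))$ then $x$ belongs to the zero set of every conjugate pair $(u^{+},u^{-})$ of weak KAM solutions, so $u^{+}(x)-u^{-}(x)=0$ for each such pair, and taking the infimum gives $P(x,x)=0$. For the $(\Rightarrow)$ direction my plan is to establish $P(x,x)=h(x,x)$ via two opposite inequalities. The \textbf{upper bound} $P(x,x)\leq h(x,x)$ is obtained by displaying a concrete conjugate pair that realizes $h(x,x)$ up to an arbitrarily small error: for any $p\in\pi(\hat{\Sigma}(L))$, Fathi's construction shows that $u_{p}^{-}(y)=h(p,y)$ is a negative-type weak KAM solution and $u_{p}^{+}(y)=-h(y,p)$ is a positive-type one, and the two together form a conjugate pair; evaluating their difference at $x$ and choosing $p$ along a loop based at $x$ that almost realizes $h(x,x)$ yields the desired bound. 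The \textbf{lower bound} $P(x,x)\geq h(x,x)$ is obtained from the domination inequalities $u^{-}(y)-u^{-}(z)\leq h_{t}(z,y)$ and $u^{+}(y)-u^{+}(z)\leq h_{t}(z,y)$ valid for every conjugate pair and every $t>0$: picking $y=z=x$ and a loop from $x$ to itself of time $t$, together with the fact that $u^{+}=u^{-}$ on the projected Mather set and the existence of calibrated curves emanating from $x$, gives $u^{+}(x)-u^{-}(x)\geq h_{t}(x,x)+o(1)$, and letting $t\to\infty$ along the liminf completes the inequality.

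Combining the two bounds gives $P(x,x)=h(x,x)$, and the hypothesis $P(x,x)=0$ translates into $h(x,x)=0$, which by the classical result forces $x\in\pi(\hat{\Sigma}(L))$. The main technical obstacle lies in the upper-bound step: one must produce, for each reference point $p$ in the Aubry set, an honest conjugate pair whose difference at $x$ is controlled by $h(x,x)$, and this in turn relies on the non-trivial bijection (due to Fathi) between fixed points of the Lax--Oleinik semigroups and the structure of the compactified Aubry set; once this dictionary is available, the rest of the argument is a sequence of standard domination/calibration manipulations in weak KAM theory.
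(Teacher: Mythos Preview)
The paper does not actually prove this proposition; it is recorded as a known fact from weak KAM theory with references to \cite{Fa}, \cite{Fa1}, \cite{A2}, \cite{A3}, and \cite{CI}. Nonetheless, your proposed argument contains a genuine gap.

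Your route for the implication $(\Rightarrow)$ is to establish the identity $P(x,x)=h(x,x)$ and then appeal to the classical characterization $h(x,x)=0\iff x\in\pi(\hat\Sigma(L))$. But $P(x,x)=h(x,x)$ is \emph{false} in general. The paper itself records that $h(x,x)=\sup_{(u^-,u^+)}\{u^-(x)-u^+(x)\}$ over conjugate pairs, while (after correcting the evident sign slip in the definition) $P(x,x)$ is the corresponding infimum; these coincide only when all conjugate pairs give the same difference, which happens precisely when there is a unique static class. The paper states this explicitly as a corollary a few lines later: ``Suppose that there exists just one static class. Then $h(x,x)=P(x,x)$.'' Your ``lower bound'' $P(x,x)\ge h(x,x)$ therefore cannot be proved without that extra hypothesis, and the sketch you give (domination inequalities with $y=z=x$, calibrated curves, and an unspecified $o(1)$) would, if correct, contradict examples with several static classes. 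Your ``upper bound'' construction is also misdirected: for $p$ in the Aubry set one gets $u_p^-(x)-u_p^+(x)=h(p,x)+h(x,p)\ge h(x,x)$ by the triangle inequality, not $\le$.

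The standard proof (Fathi) does not pass through $P=h$. The key fact is that for \emph{every} conjugate pair $(u^-,u^+)$ the coincidence set $\{u^-=u^+\}$ is exactly the projected Aubry set, independently of the pair chosen. Your $(\Leftarrow)$ direction is then correct as written. For $(\Rightarrow)$, if $x\notin\pi(\hat\Sigma(L))$ then $u^-(x)-u^+(x)>0$ for every conjugate pair; compactness of the space of weak KAM solutions modulo additive constants makes the infimum strictly positive, so $P(x,x)>0$. No comparison with $h$ is needed.
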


Finally, we are able now to state the main result of the section, which is one of the main tools
used in the proof of Theorem 1.

\begin{theorem} \label{peierldeviation} \cite{A2} \cite{A3} Let $(M,g)$ be a compact
surface with $K\leq 0$, let $\gamma$, $\omega$ be as in the
assumptions of the main theorem. Then, the measures
$\mu_{\lambda\omega}$ satisfy the following large deviation type
formulae
\[ \lim_{\lambda \rightarrow +\infty}\frac{1}{\lambda}ln(\mu_{\lambda\omega}(A)) \leq  -\inf_{x\in A}P(x,x),\]
for every closed set $A \in M$, and
\[ \lim_{\lambda \rightarrow +\infty}\frac{1}{\lambda}ln(\mu_{\lambda\omega}(B)) \geq  -\inf_{x\in B}P(x,x),\]
for every open set $B$.
\end{theorem}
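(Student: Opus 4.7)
The plan is to follow the strategy of Anantharaman's semiclassical analysis of the stationary measures of twisted Brownian motions, interpreted through weak KAM theory. First I would write an explicit formula for the density $f_{\lambda\omega}$ of the stationary measure $\nu_{\lambda\omega}=f_{\lambda\omega}\,dx$ in terms of the ground-state eigenfunctions of the twisted Laplacians. Recall from Section~2.3 that $h_{\pm\lambda\omega}$ are obtained by integrating the heat kernels $K^{1}_{\pm\lambda\omega}$ against the corresponding eigenmeasures $\mu_{\pm\lambda\omega}$, and that after conjugating by $h_{\lambda\omega}$ one obtains the Markov semigroup $Q^t$ whose invariant density $f_{\lambda\omega}$ can be written, up to a normalizing constant, as a product $h_{\lambda\omega}(x)\,h_{-\lambda\omega}(x)$. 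This reduces the problem to controlling the logarithmic asymptotics of $h_{\pm\lambda\omega}$ as $\lambda\to\infty$.

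Second, I would carry out the semiclassical limit of these twisted eigenfunctions. Using the Feynman-Kac representation of $P^{t}_{\lambda\omega}$, the functions $h_{\pm\lambda\omega}$ admit path-integral expressions in which the exponential weight involves the action of $L_{\pm\lambda\omega}$ along Brownian paths. A Laplace-type (large-deviations-for-Brownian-motion) argument, combined with the fact that $L_{\lambda\omega}=\lambda\,L_{\omega}$ up to an irrelevant scaling of time, shows that
\[
\frac{1}{\lambda}\ln h_{\lambda\omega}(x)\longrightarrow u^{+}(x),\qquad
\frac{1}{\lambda}\ln h_{-\lambda\omega}(x)\longrightarrow -\,u^{-}(x)
\]
uniformly on $M$, where the limits are, respectively, fixed points of the forward and backward Lax-Oleinik semigroups $T^{+}_{t}+c_{0}(L)t$ and $T^{-}_{t}-c_{0}(L)t$ for $L=L_{\omega}$. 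The identification of the limit with $u^{\pm}$ uses that these are the unique (up to the equivalence generated by conjugacy) viscosity solutions of the Hamilton-Jacobi equation at the critical value, together with the uniqueness of the Aubry-Mather measure which fixes the normalization. The pair $(u^{+},u^{-})$ obtained this way is a conjugate pair in the sense of Section~4.

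Third, combining the two previous steps one gets
\[
\frac{1}{\lambda}\ln f_{\lambda\omega}(x)=\frac{1}{\lambda}\ln h_{\lambda\omega}(x)+\frac{1}{\lambda}\ln h_{-\lambda\omega}(x)+o(1)\longrightarrow -\bigl(u^{+}(x)-u^{-}(x)\bigr),
\]
uniformly in $x$. A Varadhan-type argument on the integral $\mu_{\lambda\omega}(A)=\int_{A}f_{\lambda\omega}\,dx$ then yields the upper bound $-\inf_{x\in A}(u^{+}-u^{-})(x)$ for closed $A$ and the matching lower bound for open $B$. Finally, since the choice of base point/normalization for $h_{\pm\lambda\omega}$ corresponds to the choice of a particular conjugate pair, one can take the infimum over all admissible conjugate pairs, and by the definition of Peierl's barrier this infimum equals $P(x,x)$; this produces the stated bounds with rate function $P(\cdot,\cdot)$.

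The hard part is the second step: rigorously passing from the probabilistic Feynman-Kac representation of $h_{\pm\lambda\omega}$ to a weak KAM solution in the semiclassical limit, and controlling the convergence uniformly enough to allow insertion into a Laplace/Varadhan argument. This requires a careful combination of large-deviations estimates for Brownian paths (Freidlin-Wentzell in the drift $\pm\lambda\omega$) with Fathi's variational characterization of $u^{\pm}$ and the Lipschitz regularity of these solutions on the projected Mather set; additional care is needed because the Mather set here is non-hyperbolic, so one must avoid any estimate that degenerates at $\gamma$. Once this semiclassical identification is in place, the remainder of the argument reduces to standard Laplace asymptotics and the definition of the second Peierl's barrier.
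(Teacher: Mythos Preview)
The paper does not give its own proof of this theorem: it is stated as a cited result of Anantharaman \cite{A2}, \cite{A3} and used as a black box in the proof of Theorem~1. So there is no in-paper argument to compare against; what you have written is essentially an outline of Anantharaman's original proof, and in that sense your proposal is on the right track and matches the intended source.

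A couple of cautions about your sketch. First, the assertion ``$L_{\lambda\omega}=\lambda L_{\omega}$ up to an irrelevant scaling of time'' is correct only at the level of actions after the time/velocity rescaling $s=\lambda t$; as written it is misleading, since the kinetic term in $L_{\lambda\omega}$ is not multiplied by $\lambda$. You should make this reparametrization explicit when you invoke it. Second, the passage from the pointwise limit $\frac{1}{\lambda}\ln f_{\lambda\omega}(x)\to -(u^{-}-u^{+})(x)$ to the large-deviation bounds is not simply ``Varadhan's lemma'': you need uniform control of the error in the semiclassical convergence, and the lower bound for open sets genuinely uses the uniqueness of the conjugate pair (equivalently, the assumption that the Aubry set equals the Mather set and there is a single static class). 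Without that uniqueness the limit functions $u^{\pm}$ could depend on subsequences and only the upper bound survives, exactly as the paper remarks after the statement of the theorem. Your final paragraph correctly identifies the semiclassical identification of $h_{\pm\lambda\omega}$ with weak KAM solutions as the substantive step; the non-hyperbolicity of $\gamma$ is, however, irrelevant for \emph{this} theorem, which holds in the generality of \cite{A2}, \cite{A3}; it only enters later in the paper when estimating $P(x,x)$ via Busemann functions.
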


In other words, we know  that the measures $\mu_{\lambda\omega}(A)$ tend to zero as $\lambda \rightarrow \infty$; so,
the Peierl's barrier gives an estimate of the logarithmic rate of convergence. The logarithmic rates became worst
as the sets approach the Aubry set, and, if the closure of an open set meets the Aubry set, then the above rates
are just $0$. This result can be interpreted in the present  situation as a concentration of the stationary measures around a Dirac
measure in the space of continuous paths which assigns measure one to the closed geodesic in the Mather set, and zero to any
set not containing this geodesic
We stated Theorem \ref{peierldeviation}
suited to our purposes, as it is in \cite{A3}: we are assuming that the Aubry set and the Mather set coincide,
and that there is a unique Mather measure. A more general result in \cite{A2} grants that only the first one
of the inequalities in Theorem \ref{peierldeviation} holds.

\section{Busemann functions}

Manifolds with nonpositive curvature are special examples of
manifolds without conjugate points (the exponential map at every
point is nonsingular). So every geodesic in $\tilde{M}$ is
globally minimizing, and the convexity of the metric yields the
existence of two lagrangian, invariant foliations whose leaves are
locally graphs of the canonical projection. A well known way to
define such foliations in terms of closed 1-forms is through the
so-called {\em Busemann functions}:
given \( \theta = (p,v)\mbox{ } \in \mbox{ } T_{1}\tilde{M} \) the
{\em Busemann function } \( b^{\theta}\mbox{ }: \tilde{M} \to
\mathbb{R}\) associated to \( \theta\) is defined by \[
b^{\theta}(x) = \lim_{t\rightarrow
+\infty}(\tilde{d}(x,\gamma_{\theta}(t)) - t) ,\]
where $\tilde{d}$ is the metric on $\tilde{M}$. From now on we will also denote such distance by $d$.

The level sets of
\(b^{\theta}\) are the {\em horospheres} \( H_{\theta}(t) \) where
the parameter $t$ means that \( \gamma_{\theta}(t) \mbox{ } \in
\mbox{ } H_{\theta}(t) \) (notice that \( \gamma_{\theta}(t) \)
intersects each level set of \(b^{\theta}\) perpendicularly at
only one point). The next lemma summarizes some basic properties of
horospheres (which can be found in \cite{Ru1} \cite {Ru2} \cite{E},
for instance).

\begin{lemma} Let $(M,g)$ be a compact $C^{\infty}$ manifold
without conjugate points.
\begin{enumerate}
\item \( b^{\theta}\) is a \( C^{1} \) function for every
$\theta$. If $(M,g)$ has nonpositive curvature $b^{\theta}$ is a
$C^{2}$ function for every $\theta$.
\item The gradient \( \nabla b^{\theta} \) has norm equal to one at every point.
\item Every horosphere is a $C^{1+K}$, embedded submanifold of dimension $n-1$
($C^{1+K}$ means $K$-Lipschitz normal vector field), where $K$ is
a constant depending on curvature bounds. If the curvature of
$(M,g)$ is nonpositive each horosphere is a $C^{2}$ submanifold.
\item The orbits of the integral flow of \( -\nabla b^{\theta} \),
\( \psi^{\theta}_{t} : \tilde{M} \longrightarrow \tilde{M} \), are
geodesics which are everywhere perpendicular to the horospheres
$H_{\theta}$. In particular, the geodesic \( \gamma_{\theta} \) is
an orbit of this flow and we have that
\[ \psi^{\theta}_{t}( H_{\theta}(s)) \mbox{ }= \mbox{ } H_{\theta}(s+t) \]
for every $t,s \in R$.
\end{enumerate}
\end{lemma}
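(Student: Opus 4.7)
The plan is to prove each item in turn, using as the central tool the fact that the Busemann function $b^\theta$ is obtained as a uniform limit on compact sets of the functions $b^\theta_t(x)=\tilde{d}(x,\gamma_\theta(t))-t$, which (away from $\gamma_\theta(t)$) inherit all the smoothness we need from the absence of conjugate points. My first step is to verify that this limit exists: by the reverse triangle inequality, $t\mapsto b^\theta_t(x)$ is nonincreasing and bounded below by $-\tilde{d}(x,\gamma_\theta(0))$, hence converges pointwise, and the $b^\theta_t$ are $1$-Lipschitz, so Arzel\`a--Ascoli upgrades this to locally uniform convergence.

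Next I would prove item (2) together with the $C^1$ part of item (1). Because $(\tilde{M},\tilde{g})$ has no conjugate points, $\exp_{\gamma_\theta(t)}$ is a global diffeomorphism on $\tilde{M}$, so each $b^\theta_t$ is smooth on $\tilde{M}\setminus\{\gamma_\theta(t)\}$ with $\|\nabla b^\theta_t\|=1$ everywhere it is defined. To upgrade to $C^1$ convergence, I would show that the unit vectors $\nabla b^\theta_t(x)$ also converge: using the first variation formula, $-\nabla b^\theta_t(x)$ is the initial velocity of the unique unit-speed minimizing geodesic from $x$ to $\gamma_\theta(t)$, and as $t\to\infty$ these geodesics converge to an asymptote of $\gamma_\theta$ (this is where Eberlein-type results on manifolds without conjugate points come in). Passing to the limit gives $\|\nabla b^\theta\|=1$ and $C^1$ regularity; to boost to $C^2$ under nonpositive curvature, I would invoke the Riccati equation along $\gamma_\theta$: with $K\le 0$, the second fundamental forms of the level hypersurfaces of $b^\theta_t$ satisfy a uniform bound and converge as $t\to\infty$ to a solution of the limit Riccati equation, whose regularity gives $C^2$ smoothness of $b^\theta$.

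With (1) and (2) in hand, item (3) follows at once: since $\nabla b^\theta$ has unit norm, $b^\theta$ has no critical points and the implicit function theorem identifies the horospheres $H_\theta(t)=(b^\theta)^{-1}(t)$ as embedded hypersurfaces of the same regularity as $b^\theta$; the $C^{1+K}$ Lipschitz constant in the general case is extracted from the uniform Jacobi-field estimates governing the convergence of $\nabla b^\theta_t$. For item (4), the integral curves of $-\nabla b^\theta$ are unit-speed since $\|\nabla b^\theta\|=1$, and to see they are geodesics one observes that along any such curve $\sigma(s)$ one has $b^\theta(\sigma(s))=b^\theta(\sigma(0))-s$, so $\sigma$ realizes the infimum defining $b^\theta$ and must be an asymptote of $\gamma_\theta$; in particular $\gamma_\theta$ itself is the integral curve through $\gamma_\theta(0)$, and the flow carries level sets to level sets as claimed. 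Perpendicularity to horospheres is automatic because $\nabla b^\theta$ is normal to the level sets of $b^\theta$.

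The main obstacle I expect is item (1) in the strongest form, namely $C^2$ regularity under $K\le 0$. This requires more than just pointwise convergence of the second derivatives: one needs a uniform two-sided bound on the shape operators of the level sets of $b^\theta_t$ as $t\to\infty$, which is the point where nonpositive curvature is truly used (through the monotonicity of solutions of the matrix Riccati equation and the absence of focal points). Everything else is bookkeeping with the triangle inequality and Arzel\`a--Ascoli, but the second-order control is the nontrivial geometric input.
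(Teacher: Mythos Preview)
Your outline is essentially correct and follows the standard route found in the references the paper cites (\cite{Ru1}, \cite{Ru2}, \cite{E}). However, you should be aware that the paper itself does \emph{not} prove this lemma: it is stated as a summary of known basic properties of horospheres, with the sentence ``The next lemma summarizes some basic properties of horospheres (which can be found in \cite{Ru1} \cite{Ru2} \cite{E}, for instance)'' serving as the entire justification. There is no proof in the paper to compare against.

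That said, your sketch is the right one and matches what is done in those references: the monotone $1$-Lipschitz approximants $b^\theta_t$, convergence of $-\nabla b^\theta_t(x)$ via convergence of minimizing segments to an asymptote (this is Eberlein's argument in the no-conjugate-points setting), and the Riccati/shape-operator bounds under $K\le 0$ for the $C^2$ upgrade. One small point worth tightening: the $C^1$ regularity in the no-conjugate-points case (without a curvature sign) is more delicate than your sketch suggests, since convergence of the unit gradients does not follow from Arzel\`a--Ascoli alone---one needs the divergence of geodesic rays (bounded asymptote) property to ensure uniqueness of the limiting asymptotic direction, which is exactly what the cited works of Eberlein and Ruggiero supply. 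Your identification of the $C^2$ step as the genuinely nontrivial geometric input is accurate.
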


\bigskip

A geodesic $\beta$ is {\em asymptotic} to a geodesic $\gamma$ in
$\tilde{M}$ if there exists a constant $C>0$ such that
$d(\beta(t), \gamma(t)) \leq C$ for every $t \geq 0$. Nonpositive
curvature implies  that every two integral orbits of $ -\nabla
b^{\theta} $ are asymptotic. Item (2) tells us that Busemann
functions are special, exact solutions of  the Hamilton-Jacobi
equation
\[\tilde{ H}(p,d_{p}b^{\theta})= 1,\]
where $\tilde{ H}:T^{*}\tilde{M} \to {\mathbb R}$ is just the
pullback $\tilde{g}$ of the metric $g$ by the covering map,
$ \tilde{ H}(p,v)= \frac{1}{2}\tilde{g}_{p}(v,v)$. Moreover, if $(M,g)$ is a
compact surface without conjugate points and $\gamma_{\theta}
\subset \tilde{M}$ is a lift of a closed geodesic, the set of
points $x\in \tilde{M}$ where $b^{\theta}(x)+b^{-\theta}(x) =0$ is
just the set of lifts of closed geodesics homotopic to
$\pi(\gamma_{\theta})$ which are axes of $T_{\gamma_{\theta}}$.
This elementary observation is crucial for the section: when
$\pi(\gamma_{\theta})$ is unique in its homotopy class the
functions $b^{\theta}$, $b^{-\theta}$ behave like a pair of
conjugate solutions of the Lax-Oleinik operator. Namely,
$b^{\theta}+b^{-\theta}$ takes its minimum value zero just at the
points of $\gamma_{\theta}$, and it is positive everywhere else.

\section{Peierl's barrier and Busemann functions}

\begin{definition}
The Peierl's barrier is the function $h:M\times M \longrightarrow
{\mathbb R}$ given by
$$h(x,y) = \lim_{T\rightarrow \infty} inf_{\alpha \in C_{T}(x,y)}\{ A_{L+c_{0}(L)}(\alpha)\} ,$$
where $C_{T}(x,y)$ is the set of $C^1$ curves $\alpha: [0,T]\longrightarrow M$ such
that $\alpha(0)=x$, $\alpha(T)=y$.
\end{definition}

The above definition given by R. Man\'{e} \cite{Man} is based in
an analogous definition due to J. Mather \cite{Mat}. We introduce
the Peierl's barrier by two reasons. First of all, it is naturally
connected to the second Peierl's barrier defined in the previous
section; and secondly, the Peierl's barrier is defined in a more geometric way than
the second Peierl's barrier; it's actually really close to Busemann functions.
The purpose of the section is to describe in detail the major issues. Let us begin
with some basic properties of the Peierl's barrier (see \cite{Fa}).

\begin{lemma} \label{PeierlAubry}
Let $M$ be a $C^{\infty}$ compact manifold, and $L$ be a $C^{2}$ Lagrangian that is strictly convex
and superlinear in each tangent space $T_{p}M$. Let $h(x,y)$ be the Peierl's barrier of $L$. Then
\begin{enumerate}
\item The function $h(x,y)$ is Lipschitz continuous.
\item A point $x$ is in the projected Aubry set if and only if $h(x,x)=0$.
\item Given $x \in M$, there exists a sequence $\gamma_{n}:[0,t_{n}]\longrightarrow M$
of minimizers of the action of $L$ such that
\begin{itemize}
\item $\gamma_{n}(0)=\gamma_{n}(t_{n})=x$ for every $n >0$.
\item $\lim_{n\rightarrow \infty}t_{n} = \infty$.
\item $h(x,x) = \lim_{n\rightarrow \infty}(A_{L+c_{0}(L)}(\gamma_{n}))$.
\end{itemize}
\end{enumerate}
\end{lemma}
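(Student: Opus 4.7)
The plan is to treat the three items essentially independently, relying on standard Tonelli theory plus the results already invoked earlier in the excerpt (in particular the Proposition characterising the projected Aubry set as the zero set of the second Peierl's barrier).

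For part (1), I would argue by a curve-surgery argument. Fix $T$ large and let $\alpha:[0,T]\to M$ be a near-minimizer in $C_T(x,y)$ of $A_{L+c_0(L)}$. Given nearby points $x',y'\in M$, prepend a unit-speed short geodesic $\sigma_1$ from $x'$ to $x$ of length $d(x,x')$ and append a short geodesic $\sigma_2$ from $y$ to $y'$. The concatenated curve (suitably reparametrised on $[0,T+d(x,x')+d(y,y')]$) belongs to $C_{T'}(x',y')$ for the corresponding $T'$. Because $M$ is compact and $L$ is $C^2$, the restriction of $L$ to the unit tangent bundle is bounded by some constant $C_0$; the extra action picked up on $\sigma_1$ and $\sigma_2$ is therefore at most $(C_0+c_0(L))\bigl(d(x,x')+d(y,y')\bigr)$. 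Taking the infimum over $\alpha$ and then sending $T\to\infty$ (with a negligible adjustment in the time parameter, using that $h$ is finite) gives
\[
h(x',y') \le h(x,y) + K\bigl(d(x,x')+d(y,y')\bigr)
\]
for a constant $K$ independent of $x,y,x',y'$; symmetrising yields Lipschitz continuity.

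For part (2), I would invoke the Proposition stated just before Theorem \ref{peierldeviation}, which says that the second Peierl's barrier $P(x,x)$ vanishes exactly on the projected Aubry set. What is needed, then, is the identity $h(x,x)=P(x,x)$. The standard weak KAM dictionary (Fathi) gives, for every conjugate pair $u^-\in S^-$, $u^+\in S^+$, the two inequalities $u^-(y)-u^-(x)\le h(x,y)$ and $u^+(x)-u^+(y)\le h(x,y)$; specialising to $y=x$ forces $u^+(x)-u^-(x)\le h(x,x)$, hence $P(x,x)\le h(x,x)$. For the converse, standard representation formulas (Propositions 5.3.1 and 5.3.8 of Fathi) give a conjugate pair realising the infimum up to arbitrary precision at any prescribed point, yielding $h(x,x)\le P(x,x)$. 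Thus $h(x,x)=0$ iff $P(x,x)=0$ iff $x\in \pi(\hat{\Sigma}(L))$.

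For part (3), the key input is the Tonelli existence theorem: for each $T>0$ there exists an absolutely continuous minimizer $\gamma_T:[0,T]\to M$ of $A_{L+c_0(L)}$ within $C_T(x,x)$, and any such $\gamma_T$ is automatically a $C^2$ solution of the Euler--Lagrange equation with energy $c_0(L)$ up to the boundary. Writing $a(T):=\inf_{C_T(x,x)} A_{L+c_0(L)}$, one has $h(x,x)=\liminf_{T\to\infty} a(T)$ by definition, so we can pick $t_n\to\infty$ with $a(t_n)\to h(x,x)$, and set $\gamma_n$ to be the Tonelli minimizer on $[0,t_n]$.

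The main technical obstacle is part (2): the identification $h(x,x)=P(x,x)$ (as well as the underlying fact that $h$ is well defined as a genuine limit, not just a $\liminf$) is not formally trivial and relies on the full machinery of weak KAM solutions developed by Fathi. The surgery in (1) and the Tonelli extraction in (3) are, by contrast, essentially routine once one has the necessary compactness and regularity statements for minimizers in hand.
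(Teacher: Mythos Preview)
The paper does not actually prove this lemma: it is stated as a collection of ``basic properties of the Peierl's barrier (see \cite{Fa})'' and is simply quoted from Fathi's monograph. So there is no argument in the paper to compare yours against; your proposal is effectively an independent sketch of the standard proofs.

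Your arguments for items (1) and (3) are along the usual lines and are fine. The curve-surgery estimate in (1) is exactly the standard way Lipschitz continuity of $h$ is obtained, and the use of Tonelli's theorem in (3) to produce genuine minimizers $\gamma_{n}$ on each $[0,t_{n}]$, combined with a diagonal choice of $t_{n}\to\infty$, is correct. (You are right that one needs the fact that the limit defining $h$ exists and not merely the $\liminf$; this is part of the Fathi/Ma\~n\'e theory you are already invoking.)

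There is, however, a genuine gap in your argument for (2). From the domination inequalities
\[
u^{-}(y)-u^{-}(x)\le h(x,y), \qquad u^{+}(x)-u^{+}(y)\le h(x,y),
\]
specialising to $y=x$ gives only $0\le h(x,x)$; it does \emph{not} yield $u^{+}(x)-u^{-}(x)\le h(x,x)$ (nor $u^{-}(x)-u^{+}(x)\le h(x,x)$), so the inequality $P(x,x)\le h(x,x)$ is not established by what you wrote. The correct route is to use a point $z$ in the projected Mather set, where $u^{-}(z)=u^{+}(z)$ for a conjugate pair: the two domination inequalities with $y=z$ (resp.\ $x=z$) then combine to give $u^{-}(x)-u^{+}(x)\le h(x,z)+h(z,x)$, and one concludes by the triangle-type properties of $h$ through the Aubry set. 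Alternatively, and more directly, one can simply take the Ma\~n\'e definition of the projected Aubry set as $\{x:h(x,x)=0\}$ and quote the Fathi equivalence with the conjugate-pair characterisation, which is how the paper (implicitly, via the citation) treats it. Either way, the identification $h(x,x)=P(x,x)$ that you invoke is in general false --- the lemma stated just after this one in the paper gives $h(x,x)=\sup\{u^{-}(x)-u^{+}(x)\}$, not the infimum --- and the two quantities coincide only under the single-static-class hypothesis (see the Corollary a bit further on). What is true in general, and what item (2) asserts, is only that their zero sets agree.
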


So $h(x,x)$ vanishes at the projected Aubry set, like the differences of conjugate pairs of the
Hamilton-Jacobi equation. The geometry of the manifold shows up in item (2) of the above lemma:
if $L(p,v)= \frac{1}{2}g_{p}(v,v) - \omega$, the minimizers are just geodesics of $(M,g)$, so item
(2) tells us that the value of $h(x,x)$ is the limit of the values of the action of $L+c_{0}(L)$
evaluated on a sequence of minimizing loops based on $x$ whose lengths go to infinity. Looking closer
at the relationship between first and second Peierl's barriers we have (see for instance \cite{Fa1}):

\begin{lemma} The following assertions hold:
\begin{enumerate}
\item For every pair of conjugate functions $u^{-}$, $u^{+}$ we have
$$u^{-}(x) - u^{+}(x) \leq h(x,y) $$
for every $x,y \in M$.
\item $h(x,y) = \sup_{u^{-},u^{+}}\{u^{-}(y)-u^{+}(x) \}$ where the
supremum runs over all pair of conjugate pairs.
\end{enumerate}
\end{lemma}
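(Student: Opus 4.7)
The plan rests on three ingredients: the Lax-Oleinik fixed-point characterization of $u^{-}$ and $u^{+}$; the identity $u^{-}(z)=u^{+}(z)$ on the projected Mather set $\mathcal{M}_{0}$ built into the conjugate-pair definition; and the shadowing fact that long near-minimizing curves must accumulate on $\mathcal{M}_{0}$. Throughout I read the inequality in part~(1) with the pairing $u^{-}(y)-u^{+}(x)\leq h(x,y)$ that is compatible with the sup formula of part~(2); the stated form appears to have a typographical slip in the left-hand side.

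First I would record the \emph{domination} inequality, valid for both $u\in\{u^{-},u^{+}\}$: for any absolutely continuous curve $\alpha:[0,T]\to M$ from $p$ to $q$,
$$ u(q)-u(p)\leq A_{L+c_{0}(L)}(\alpha).$$
This unfolds directly from the fixed-point property for the Lax-Oleinik semigroups $T^{-}_{t}-c_{0}(L)t$ and $T^{+}_{t}+c_{0}(L)t$ together with the definition of $T^{\pm}_{t}$; geometrically it says that $u^{-}$ and $u^{+}$ are weak subsolutions of the Hamilton-Jacobi equation at the critical level $c_{0}(L)$.

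For part~(1), fix $x,y\in M$ and a large $T$, and choose $\alpha_{T}\in C_{T}(x,y)$ with $A_{L+c_{0}(L)}(\alpha_{T})$ close to the infimum defining $h(x,y)$. Standard weak KAM theory (the theory of semi-static and static curves, see \cite{Fa} and \cite{CI}) forces such long near-minimizers to shadow $\mathcal{M}_{0}$, so some $\alpha_{T}(s)$ lies within an $\epsilon(T)\to 0$ neighborhood of a point $z\in\mathcal{M}_{0}$. Splitting
$$ u^{-}(y)-u^{+}(x)=\bigl(u^{-}(y)-u^{-}(z)\bigr)+\bigl(u^{-}(z)-u^{+}(z)\bigr)+\bigl(u^{+}(z)-u^{+}(x)\bigr),$$
the middle term vanishes by the conjugate-pair property, and the flanking terms are bounded via the domination inequality applied to $\alpha_{T}|_{[s,T]}$ and $\alpha_{T}|_{[0,s]}$, the Lipschitz error from the $\epsilon(T)$-approximation being absorbed in the limit. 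Sending $T\to\infty$ yields the inequality. For part~(2), the direction $\leq$ is (1); for $\geq$ I would construct a nearly saturating conjugate pair from the Peierl's barrier itself, taking $u^{-}(\cdot):=h(x,\cdot)$ (a backward weak KAM solution in Fathi's theory) and its Fathi-conjugate partner $u^{+}$, then using $h(z,z)=0$ on the projected Aubry set together with the triangle-type subadditivity of $h$ to recover the supremum.

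The main obstacle will be the shadowing step in part~(1): the claim that long near-minimizers from $x$ to $y$ must accumulate on $\mathcal{M}_{0}$ is the nontrivial analytic input, requiring Mather's graph theorem and the characterization of the Aubry set as projections of static orbits. In the setting of Theorem~1, where the Mather measure is unique and supported on the closed geodesic $\gamma$, this is geometrically transparent from the structure of globally minimizing geodesics in the universal cover, but the general statement must be invoked carefully from the weak KAM references. A secondary technical point is the identification of $h(x,\cdot)$ as a genuine weak KAM solution and the existence of a canonical Fathi-conjugate $u^{+}$ for part~(2); both are standard in Fathi's monograph but need to be cited with care.
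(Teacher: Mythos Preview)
The paper does not supply its own proof of this lemma: it is stated immediately after the parenthetical ``(see for instance \cite{Fa1})'' and is treated as a known result from Fathi's weak KAM theory, with no proof environment following. So there is no in-paper argument to compare against.

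Your sketch is essentially the standard route found in Fathi's work. The domination inequality plus the splitting through a point $z$ where $u^{-}(z)=u^{+}(z)$ is exactly how part~(1) is obtained, and exhibiting $h(x,\cdot)$ as a backward weak KAM solution (with its Fathi-conjugate) is the usual way to saturate the supremum in part~(2). Two small remarks. First, your correction of the typographical slip in part~(1) is right: the left-hand side must be $u^{-}(y)-u^{+}(x)$ to be consistent with part~(2). Second, the accumulation point $z$ of long near-minimizers lies a priori in the projected \emph{Aubry} set rather than in $\mathcal{M}_{0}$; the paper's definition of conjugate pair only demands agreement on $\mathcal{M}_{0}$, but Fathi's theory (which the paper is citing) shows conjugate pairs automatically coincide on the full projected Aubry set, so the splitting still goes through. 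You flag this yourself as the delicate point, and you are right that it has to be cited carefully rather than reproved.
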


\subsection{Static classes and uniqueness of the weak KAM solutions}

The dynamics of the set of global minimizers determines the uniqueness of the
solutions of the Hamilton-Jacobi equation. In order to be more precise about this assertion we need some definitions.
Let us recall that a semistatic curve
$\beta:[a,b]\longrightarrow M$ is an absolutely continuous curve such that

$$A_{L+c[0]}(\beta)= h(\beta(a),\beta(b)).$$

A static curve $\alpha:[a,b]\longrightarrow M$ is an absolutely continuous curve such that

$$A_{L+c[0]}(\beta)= -h(\beta(b),\beta(a)).$$

A static curve is always semistatic. The curve $\beta :I
\longrightarrow M$ is static, if and only if, it is static
restricted to any interval contained in $I$. Since $h(x,y) +
h(y,x)$ is the infimum of the action of the Lagrangian at the
critical level $c[0]$, such curves are minimizers of the action.
In the case of mechanical Lagrangians, static curves are the
maximum points of the potential, while semistatic curves are
projections of orbits of the Euler-Lagrange flow which tend to the
singularities. Observe that \cite{CI} the Aubry set is the set of
static curves. So in our case, the only static curve is the closed
geodesic $\gamma$ supporting the Mather measure.
\bigskip

Two points $\theta_{1}$, $\theta_{2}$ in $TM$ are in the same static class if
$$h(\pi(\theta_{1},\theta_{2})) + h(\pi(\theta_{2},\theta_{1})) = 0.$$

According to our assumptions, there is only one static class,
whose elements are the points of $\gamma$. We need the following result
from \cite{CI}:

\begin{lemma}
Suppose that the set of static classes is unique. Then there exist (up to
additive constant) a unique pair of conjugate solutions of the Hamilton-Jacobi
equation.
\end{lemma}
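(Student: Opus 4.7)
The plan is to exploit the canonical representation of weak KAM solutions in terms of the Peierl's barrier evaluated at a representative of the unique static class. Under this hypothesis, the projected Aubry set coincides with the single static class, and the representation collapses to a one-parameter family for $u^{-}$, and likewise for $u^{+}$; the conjugacy condition then fixes the relative constant, leaving only a global additive constant free.

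First I would invoke the standard representation theorem for weak KAM solutions (see Theorem 8.6.1 in \cite{Fa}, or Section 4 of \cite{CI}): every $u^{-} \in S^{-}$ admits the formula
$$u^{-}(x) = \inf_{p \in \pi(\hat{\Sigma}(L))}\{u^{-}(p) + h(p,x)\},$$
and symmetrically $u^{+}(x) = \sup_{q \in \pi(\hat{\Sigma}(L))}\{u^{+}(q) - h(x,q)\}$. Hence each weak KAM solution is determined by its restriction to the projected Aubry set, which under the present hypothesis reduces to the single static class $\Lambda$.

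Next I would observe that for $p,q \in \Lambda$ one has $h(p,q) + h(q,p) = 0$; combined with the general dominance inequalities $u^{-}(y) - u^{-}(x) \leq h(x,y)$ and $u^{+}(x) - u^{+}(y) \leq h(y,x)$ (valid for weak KAM solutions of the corresponding type; see \cite{Fa}), this forces $u^{-}(q) - u^{-}(p) = h(p,q) = u^{+}(q) - u^{+}(p)$. Therefore $u^{-}\vert_{\Lambda}$ and $u^{+}\vert_{\Lambda}$ are both translates, by a single real constant, of the same function $h(p_{0},\cdot)\vert_{\Lambda}$ (for any fixed base point $p_{0}\in \Lambda$). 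Consequently each of $u^{-}$ and $u^{+}$ is parametrized by exactly one additive constant, so the pair $(u^{-},u^{+})$ has at most two real parameters.

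Finally, the conjugacy condition $u^{-}(x) = u^{+}(x)$ for $x \in \mathcal{M}_{0} \subset \Lambda$, applied at one single point of $\mathcal{M}_{0}$, forces those two constants to coincide. The representation formulas then produce a pair $(u^{-}, u^{+})$ that is unique up to one global additive constant, which is the claim. The main obstacle will be justifying the rigidity $u^{\pm}(q) - u^{\pm}(p) = h(p,q)$ on a static class from the dominance inequalities -- this is the step where the hypothesis of a unique static class genuinely enters, since it guarantees that every pair of points of the Aubry set satisfies $h(p,q)+h(q,p)=0$. Once this rigidity is in place, the remainder of the argument is a direct parameter count over static classes.
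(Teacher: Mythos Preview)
The paper does not actually supply a proof of this lemma: it is stated as a result quoted from \cite{CI} (see the sentence ``We need the following result from \cite{CI}'' immediately preceding the statement). So there is no proof in the paper to compare against; the lemma functions as a black box imported from the literature.

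Your proposal is a correct reconstruction of the standard argument behind that black box, and it is essentially the proof one finds in \cite{CI} and in Chapter 8 of \cite{Fa}. The three ingredients you isolate --- the representation formula $u^{-}(x)=\inf_{p\in\pi(\hat\Sigma(L))}\{u^{-}(p)+h(p,x)\}$, the rigidity $u^{\pm}(q)-u^{\pm}(p)=h(p,q)$ on a static class (obtained by combining the domination inequality with $h(p,q)+h(q,p)=0$), and the conjugacy normalization on $\mathcal{M}_{0}$ --- are exactly the steps used in those references. The only point worth tightening is the rigidity step: you should make explicit that applying the domination inequality in both directions, $u^{-}(q)-u^{-}(p)\leq h(p,q)$ and $u^{-}(p)-u^{-}(q)\leq h(q,p)$, and then adding, forces both to be equalities because the right-hand sides sum to zero. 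With that made explicit the argument is complete and matches the cited source.
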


This yields

\begin{corollary}
Suppose that there exists just one static class. Then $h(x,x)=P(x,x)$.
\end{corollary}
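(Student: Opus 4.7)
The plan is to exploit the variational characterizations of both $h(x,x)$ and $P(x,x)$ as extremal values over the family of conjugate pairs of weak KAM solutions, and to observe that under the hypothesis of a unique static class this family collapses in such a way that the relevant difference $u^{-}-u^{+}$ is a single well-defined function.

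First I would apply the preceding lemma imported from \cite{CI}: uniqueness of the static class implies that any two conjugate pairs $(u^{-}, u^{+})$ of solutions of the Hamilton--Jacobi equation differ by the same additive constant in both members, so that every such pair can be written as $(u^{-}_{0}+c,\,u^{+}_{0}+c)$ for some $c \in \mathbb{R}$ and fixed reference pair $(u^{-}_{0},u^{+}_{0})$. Consequently the difference $u^{-}(x)-u^{+}(x) = u^{-}_{0}(x)-u^{+}_{0}(x)$ is independent of which admissible pair is selected, since the common constant cancels identically.

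Next I would invoke the representation lemma of the previous subsection, $h(x,y) = \sup_{(u^{-}, u^{+})} \{u^{-}(y) - u^{+}(x)\}$, and specialize to $y = x$. By the previous step, the expression inside the supremum is constant over the parametrizing family, so the sup collapses to a single value and $h(x,x) = u^{-}_{0}(x) - u^{+}_{0}(x)$. The analogous reasoning applied to the second Peierls barrier, which is defined as an infimum over conjugate pairs of the same underlying difference (interpreted with the sign convention that makes $P$ nonnegative and vanishing exactly on the projected Aubry set, as required by the proposition previously stated), gives $P(x,x) = u^{-}_{0}(x) - u^{+}_{0}(x)$ as well. Combining the two identities yields $h(x,x) = P(x,x)$.

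The substantive work is already contained in the uniqueness lemma of \cite{CI}; the remainder is mere unwinding of definitions. The only point requiring some care is to align the sign conventions of $h$ and $P$ so that the sup and inf refer to the same underlying quantity, and to confirm the cancellation of the common additive constant, which is immediate from the form of the uniqueness statement. No further analytic input, such as properties of minimizing sequences or Lipschitz estimates, is needed for this step.
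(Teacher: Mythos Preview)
Your argument is correct and matches the paper's approach exactly. The paper gives no separate proof of this corollary; it simply writes ``This yields'' immediately after the uniqueness lemma, treating the result as an immediate consequence. Your proposal spells out precisely the implicit reasoning: once the conjugate pair is unique up to a common additive constant, the difference $u^{-}-u^{+}$ is a single function, so the supremum defining $h(x,x)$ and the infimum defining $P(x,x)$ collapse to the same value. Your caveat about the sign convention is also apt, since the paper's stated definition $P(x,x)=\inf\{u^{+}(x)-u^{-}(x)\}$ together with $u^{-}-u^{+}\geq 0$ would literally give $P=-h$; the intended reading is the one you adopt, where $P$ is nonnegative and vanishes exactly on the projected Aubry set.
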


\subsection{Peierl's barrier in terms of the differences between Busemann functions}

Now, we are in shape to prove the main result of the section. We are going to link
the Busemann functions of the metric $g$ in $\tilde{M}$ to the
second Peierl's barrier $P(x,x)$. For this purpose we shall prove
that Busemann functions are naturally related with the Peierl's
barrier $h(x,x)$, and then apply the above results. Throughout the subsection, $\gamma = \gamma_{\theta}$ will
be the geodesic in the statement of Theorem 1, namely, the support of the Mather measure, $\theta \in T_{1}M$
is the initial condition of $\gamma_{\theta}$. We choose a lift $\gamma_{\tilde{\theta}}$ of $\gamma_{\theta}$
in $\tilde{M}$, and a tubular neighborhood $\tilde{N}$ of $\gamma_{\tilde{\theta}}$ such that the covering
map $\Pi: \tilde{M} \longrightarrow M$ restricted to $\tilde{N}$ is a diffeomorphism into a tubular neighborhood $N(\gamma)$ of $\gamma$.

\begin{proposition} \label{barrier}
Let $(M,g)$ satisfy the assumptions of the main theorem. Then,
\begin{enumerate}
\item $h(x,x)= P(x,x) = b^{-\tilde{\theta}}(\tilde{x}) + b^{\tilde{\theta}}(\tilde{x})$ for every $x \in
N$ and $\tilde{x} \in \tilde{N}$ such that $\Pi(\tilde{x})= x$.
\item There exist positive constants $A,B$ such that
$$ A\inf_{p \notin \tilde{N}}(b^{-\theta}(p) + b^{\theta}(p)) \leq h(\Pi(p),\Pi(p)) \leq B\inf_{p\notin \tilde{N}}(b^{-\theta}(p) + b^{\theta}(p)).$$
\end{enumerate}
\end{proposition}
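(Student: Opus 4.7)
The plan is to combine the preceding Corollary (unique static class implies $h(x,x) = P(x,x)$) with a direct geometric computation of the Peierl's barrier via action-minimizing loops in the class of $\gamma$. Since our hypotheses give a single static class (the points of the closed geodesic $\gamma$ carrying the unique Mather measure), the equality $h(x,x) = P(x,x)$ is immediate; the substantive work is to identify this common value with the Busemann sum $b^{\tilde\theta}(\tilde x) + b^{-\tilde\theta}(\tilde x)$.

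For the computation, I would first lift $\tilde\omega = df$ on the simply-connected cover $\tilde M$, and normalize so that $\gamma$ is unit-speed with $\omega(\dot\gamma)=1$, giving $c_0(L)=1/2$ and $\mathrm{per}(\gamma)$ equal to the stable norm of $\omega$. A loop $\alpha$ at $x$ of duration $T$ lifts to a path in $\tilde M$ from $\tilde x$ to $\sigma\tilde x$ for some deck transformation $\sigma$, and
$$A_{L+c_0}(\alpha) = \int_0^T \tfrac{1}{2}\|\dot\alpha\|^2\,dt + \tfrac{T}{2} - \bigl(f(\sigma\tilde x) - f(\tilde x)\bigr).$$
For fixed $T$ the first term is minimized by a constant-speed geodesic, giving $d(\tilde x,\sigma\tilde x)^2/(2T)$. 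Stable-norm extremality of $\gamma$ shows that $(f(\sigma\tilde x)-f(\tilde x))/d(\tilde x,\sigma\tilde x)$ is asymptotically maximal for $\sigma=T_\gamma^n$, so only the classes $\gamma^n$ contribute to $h(x,x)$ as $T\to\infty$.

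For $\sigma=T_\gamma^n$ one has $f(T_\gamma^n\tilde x)-f(\tilde x)=n\cdot\mathrm{per}(\gamma)$, and minimizing in $T$ for fixed $n$ gives $T=d(\tilde x,T_\gamma^n\tilde x)$ with value $d(\tilde x,T_\gamma^n\tilde x)-n\cdot\mathrm{per}(\gamma)$. The geometric core of the argument is then the asymptotic identity
$$\lim_{n\to\infty}\bigl[d(\tilde x,T_\gamma^n\tilde x) - n\cdot\mathrm{per}(\gamma)\bigr] = b^{\tilde\theta}(\tilde x) + b^{-\tilde\theta}(\tilde x),$$
reflecting the decomposition of the minimizing geodesic from $\tilde x$ to $T_\gamma^n\tilde x$, as $n\to\infty$, into a forward ray asymptotic to $\gamma_{\tilde\theta}$ of excess length $b^{\tilde\theta}(\tilde x)$, a long segment near the axis of length $\approx n\cdot\mathrm{per}(\gamma)$, and a backward ray of excess length $b^{-\tilde\theta}(\tilde x)$. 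Letting $T\to\infty$ with $n$ chosen optimally proves (1) for $x\in N$. For (2), when $p\notin\tilde N$ different lifts of $x=\Pi(p)$ may yield different Busemann sums: the upper bound follows by constructing an almost-optimal loop associated to the lift realizing the infimum, while the lower bound uses nonpositive curvature and the triangle inequality to show that any near-minimizing loop in class $\gamma^n$ is forced to pass close to some lift outside $\tilde N$, whose Busemann sum controls the asymptotic distance up to a multiplicative constant.

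The hard part is the asymptotic identity $d(\tilde x,T_\gamma^n\tilde x)-n\cdot\mathrm{per}(\gamma)\to b^{\tilde\theta}(\tilde x)+b^{-\tilde\theta}(\tilde x)$. In strictly negative curvature it is standard, via strong convexity of the Busemann functions and the flat-strip theorem; here, however, the curvature vanishes precisely on the axis $\gamma_{\tilde\theta}$, exactly where the long middle segment of the geodesic from $\tilde x$ to $T_\gamma^n\tilde x$ spends most of its length, so convexity of $b^{\pm\tilde\theta}$ degenerates in the worst possible place. Hypothesis (3) of Theorem 1 (curvature vanishing only to finite order $m$ in the perpendicular direction) is exactly what recovers Gromov hyperbolicity of $\tilde M$ and enough quantitative convexity of $b^{\pm\tilde\theta}$ off the axis to force the limit, and to convert it into the multiplicative two-sided bound required in (2). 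Making these quantitative comparisons precise is the technical crux of the proposition.
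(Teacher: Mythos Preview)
Your outline for item (1) is essentially the paper's: compute the $(L+c_{0})$-action of geodesic loops at $x$ in the class $n[\gamma]$, optimize the parametrization to obtain $d(\tilde x, T_{\gamma}^{n}\tilde x) - n\cdot\mathrm{per}(\gamma)$, and show this tends to the Busemann sum (this is the paper's Lemma~\ref{relativepotential}). The reduction to loops in the class of $\gamma$ is handled differently: you invoke stable-norm extremality of $\gamma$, whereas the paper argues by compactness (Lemma~\ref{tubularminimizers}) that if action-minimizing loops for $h(x,x)$ escaped $N(\gamma)$ infinitely often, a subsequence would converge to a global minimizer touching the Aubry set outside $\gamma$, contradicting the hypothesis Aubry $=$ Mather. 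The paper's route is cleaner here, since your stable-norm argument as stated does not immediately exclude contributions from deck transformations $\sigma\neq T_{\gamma}^{n}$.

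You do, however, misidentify the technical crux. The asymptotic identity
\[
d(\tilde x, T_{\gamma}^{n}\tilde x) - n\cdot\mathrm{per}(\gamma)\ \longrightarrow\ b^{\tilde\theta}(\tilde x)+b^{-\tilde\theta}(\tilde x)
\]
does \emph{not} require Hypothesis (3) of Theorem~1. The paper's proof of Lemma~\ref{relativepotential} uses only $K\le 0$ (convexity of the metric) together with $K<0$ off $\gamma$, which already forces the minimizing geodesic $[\tilde x, T_{\gamma}^{n}\tilde x]$ to approach the axis as $n\to\infty$; no quantitative control on the order of vanishing of $K$ is needed. Hypothesis (3) enters only later, in Section~7, where the explicit polynomial rate $d(x,\gamma)^{2+m/2}$ is obtained by comparing with annuli of revolution. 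Likewise, item (2) in the paper is a soft argument (Lemma~\ref{uniquemin}): both $h(x,x)$ and the Busemann sum are continuous and strictly positive on the compact complement of the tubular neighborhood, and the Busemann sum is convex with its minimum on $\partial\tilde N$, so two-sided multiplicative bounds are automatic. The geometric control of near-minimizing loops that you propose is not needed for Proposition~\ref{barrier} itself.
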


We prove the proposition in many steps.

\begin{lemma} \label{relativepotential}
Let $x \in N(\gamma)$, let $T>0$ be the minimum period of
$\gamma$, and let $\beta:[0,T_{n}]\longrightarrow N_{\gamma}$ be a
closed geodesic loop parametrized by arc length such that
\begin{enumerate}
\item $\beta(0)=x=\beta(T_{n})$,
\item $\beta$ is homotopic to $n[\gamma]\vert $.
\end{enumerate}
Then there exists a function $\delta(n)$, with $\lim_{n\rightarrow
+\infty}\delta(n)=0$, such that
$$\vert A_{L+c[0](L)}(\beta)- (b^{-\tilde{\theta}}(\tilde{x})+b^{\tilde{\theta}}(\tilde{x})) \vert
\leq \delta(n),$$ where $\tilde{x}\in \tilde{N}$ is any lift of $x$ in
$\tilde{N}$.
\end{lemma}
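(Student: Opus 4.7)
The plan is to lift the loop $\beta$ to a geodesic in the universal cover and identify its action with a Busemann-type correction. Let $\tilde\beta\colon[0,T_n]\to\tilde M$ be the lift of $\beta$ with $\tilde\beta(0)=\tilde x$. Because $\beta$ is homotopic to $n[\gamma]$ and $\tilde x\in\tilde N$, the endpoint is $\tilde\beta(T_n)=\tau^n\tilde x$, where $\tau$ denotes the deck transformation associated to $\gamma$ (an isometry of $\tilde M$ that translates $\gamma_{\tilde\theta}$ by the length $L_\gamma$ of $\gamma$). Nonpositive curvature precludes conjugate points, so $\tilde\beta$ realizes the distance: $T_n = d(\tilde x,\tau^n\tilde x)$. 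A direct action computation using arc-length parametrization, closedness of $\omega$, and the homotopy $\beta\sim n[\gamma]$ gives
\[
A_L(\beta)\;=\;\tfrac{T_n}{2}-n\!\int_\gamma\omega.
\]
The Aubry--Mather measure $\mu_\gamma$ supported on $\gamma$ achieves $A_L(\mu_\gamma)=-c_0(L)$; combined with $c_0(L)=\tfrac12\|\omega\|_s^2$ this determines $\int_\gamma\omega$ in terms of $L_\gamma$ and $c_0(L)$. Substituting and invoking the paper's normalization $\|\omega\|_s=1$ collapses the action to
\[
A_{L+c_0(L)}(\beta)\;=\;T_n-nL_\gamma,
\]
so the lemma reduces to the purely geometric asymptotic
\[
d(\tilde x,\tau^n\tilde x)-nL_\gamma\;=\;b^{\tilde\theta}(\tilde x)+b^{-\tilde\theta}(\tilde x)+\delta(n),\qquad \delta(n)\to 0.
\]

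For the upper bound, insert the axis into the triangle inequality: for any $s>0$,
\[
d(\tilde x,\tau^n\tilde x)\;\le\;d(\tilde x,\gamma_{\tilde\theta}(s))+(nL_\gamma-2s)+d(\gamma_{\tilde\theta}(nL_\gamma-s),\tau^n\tilde x).
\]
Because $\tau^n$ is an isometry mapping $\gamma_{\tilde\theta}(s')$ to $\gamma_{\tilde\theta}(s'+nL_\gamma)$, the last distance equals $d(\tilde x,\gamma_{\tilde\theta}(-s))$; letting $n\to\infty$ first and $s\to\infty$ afterwards, the definitions of $b^{\pm\tilde\theta}$ yield the upper bound. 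For the lower bound I plan to exploit the hypothesis $K<0$ off $\gamma$: a minimizing geodesic from $\tilde x$ to $\tau^n\tilde x$ cannot linger far from the axis $\gamma_{\tilde\theta}$ along extended central arcs, so outside bounded initial and final subarcs $\tilde\beta$ collapses onto $\gamma_{\tilde\theta}$ as $n\to\infty$. The boundary subarcs converge to the asymptotic rays of $\gamma_{\tilde\theta}$ issuing from $\tilde x$ and $\tau^n\tilde x$, whose excess length over the central axial segment is by definition $b^{\tilde\theta}(\tilde x)$ and $b^{-\tilde\theta}(\tilde x)$.

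The main obstacle is precisely this lower bound. The plain triangle inequality gives only $d(\tilde x,\tau^n\tilde x)\ge nL_\gamma$, losing the Busemann correction entirely. Closing the gap requires a quantitative convergence of $\tilde\beta$ onto $\gamma_{\tilde\theta}$, resting on the strict negativity of $K$ off $\gamma$ together with the $C^2$ and unit-gradient regularity of $b^{\pm\tilde\theta}$ noted in the previous section. This is the point at which the geometric hypotheses of Theorem 1 become essential.
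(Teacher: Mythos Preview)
Your reduction to the purely geometric statement
\[
d(\tilde x,\tau^n\tilde x)-nL_\gamma\;\longrightarrow\;b^{\tilde\theta}(\tilde x)+b^{-\tilde\theta}(\tilde x)
\]
is exactly what the paper does, and your computation of $A_{L+c_0(L)}(\beta)=T_n-nL_\gamma$ via the stable-norm normalization is the same as theirs. The upper bound through the axis is fine.

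The gap is the one you flag yourself: your lower-bound argument is only a sketch. The paper does \emph{not} treat the two directions separately. Instead it builds an explicit comparison curve: take the forward asymptote $\beta_{x,+}$ from $\tilde x$ to $\gamma_{\tilde\theta}$, the backward asymptote $\beta_{x,-}$ from $\tau^n\tilde x$, follow each for time $s_n$ chosen so that $d(\beta_{x,+}(s_n),\beta_{x,-}(-s_n))\le\mu(n)\to 0$, and join by a short segment. This broken geodesic $\tilde\alpha_n$ has the same endpoints as $\tilde\beta$, and its length is $2s_n+\mu(n)$. The key point is that $|T_n-2s_n|$ is small: both $\tilde\beta$ and $\tilde\alpha_n$ are minimizing (or nearly minimizing) curves between the same endpoints, and both are forced by convexity and the strict negativity of $K$ off $\gamma$ to approach the axis in their middle portions, so their lengths agree up to $\epsilon$. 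One then computes $2s_n$ \emph{exactly} in Busemann terms by writing $s_n=a_{x,\pm}+s_{n,\pm}$, where $a_{x,+}=b^{\tilde\theta}(\tilde x)$ and $a_{x,-}=b^{-\tilde\theta}(\tilde x)$ are the times the asymptotes take to reach the horospheres $H_{\tilde\theta}(0)$ and $H_{-\tilde\theta}(nT)$, and $s_{n,+}+s_{n,-}=nT+O(\epsilon)$. This yields the two-sided estimate in one stroke.

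So to complete your argument, replace the attempted lower bound by this comparison-curve construction; the ``collapse onto the axis'' heuristic you describe is precisely what makes $|T_n-2s_n|\to 0$, but the Busemann identities for the asymptote segments are what convert $2s_n-nT$ into $b^{\tilde\theta}(\tilde x)+b^{-\tilde\theta}(\tilde x)$ without any loss.
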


\begin{proof}
Let $\tilde{\beta}\subset \tilde{N}$ be a lift
of $\beta$, and let $\tilde{x}=\tilde{\beta}(0)$. Let
$T_{\gamma_{\tilde{\theta}}}$ be the covering translation
preserving $\gamma_{\tilde{\theta}}$. Then, the assumption implies
that
$$\tilde{\beta}(T_{n})=T_{\gamma_{\tilde{\theta}}}^{n}(\tilde{x}).$$
Since the curvature of $(M,g)$ is negative but at the points of
$\gamma$ we have that
\begin{enumerate}
\item The (unique) geodesic $[\tilde{p},T_{\gamma_{\tilde{\theta}}}^{n}(\tilde{p})]$
joining $\tilde{p}$ to $T_{\gamma_{\tilde{\theta}}}^{n}(\tilde{p})$ is contained in $\tilde{N}$
(because of the convexity of the metric),
\item The minimum distance from $\tilde{\beta}[0,T_{n}]$ to
$\gamma_{\tilde{\theta}}$ must converge to $0$ as $n\rightarrow
+\infty$. Namely, given $\epsilon>0$, there exist $n>0$
such that for every $\tilde{p}\in \tilde{N}$,
we have
$$\inf_{q \in [\tilde{p},T_{\gamma_{\tilde{\theta}}}^{n}(\tilde{p})]} d(q,\gamma_{\tilde{\theta}}) \leq \epsilon.$$
\end{enumerate}

Let $\beta_{x,+}[0,+\infty)\subset \tilde{N}$ be
the geodesic asymptotic to $\gamma_{\tilde{\theta}}$ with
$\beta_{x,+}(0)= \tilde{x}$. Let $\beta_{x,-}(-\infty,0]$ be the
geodesic asymptotic to $\gamma_{-\tilde{\theta}}$ with
$\beta_{x,-}(0)= T_{\gamma_{\tilde{\theta}}}^{n}(\tilde{x})$. Let
us denote by $[p,q]$ the geodesic joining the points $p$, $q$ in
$\tilde{M}$. We can assume without loss of generality that
$\gamma_{\tilde{\theta}}(0)$ is the closest point in
$\gamma_{\tilde{\theta}}$ to $\tilde{x}$. Since $\beta_{x,+}$,
$\beta_{x,-}$ are asymptotic to $\gamma_{\tilde{\theta}}$ there
exist $\mu(n)\rightarrow 0$ if $n\rightarrow +\infty$, and a
number $s_{n}>0$ satisfying
$$d(\beta_{x,+}(s_{n}),\beta_{x,-}(-s_{n})) \leq \mu(n).$$
Let us consider the broken geodesic $\tilde{\alpha}_{n}$ given by
$$ \tilde{\alpha}_{n}= \beta_{x,+}[0,s_{n}]\cup
[\beta_{x,+}(s_{n}),\beta_{x,-}(-s_{n})]\cup
\beta_{x,-}[-s_{n},0].$$

By the convexity of the metric, $\tilde{\alpha}_{n}$ is contained
in the region bounded by $\tilde{\beta}[0,T_{n}]$,
$\gamma_{\tilde{\theta}}$, and the geodesics
$[\tilde{x},\gamma_{\tilde{\theta}}(0)]$,
$[T_{\gamma_{\tilde{\theta}}}^{n}(x),\gamma_{\tilde{\theta}}(nT)]$.

Moreover, the length $l(\tilde{\alpha})$ of $\tilde{\alpha}$ is
$2s_{n}+d(\beta_{x,+}(s_{n}),\beta_{x,-}(-s_{n}))$, and
$s_{n}\rightarrow +\infty$ if $n\rightarrow +\infty$, so by the
definition of $\epsilon$ we have
$$\vert T_{n}-2s_{n} \vert \leq
\epsilon.$$

Let $a_{x}>0$ be defined by
$$\beta_{x,+}(a_{x,+})= H_{\tilde{\theta}}(0) \cap \beta_{x,+},$$
and let $a_{x,-} >0$ be defined by
$$\beta_{x,-}(-a_{x,-})= H_{-\tilde{\theta}}(nT) \cap \beta_{x,-}.$$

Let us still define $s_{n,+}>0$ by
$$ \beta_{x,+}(s_{n}) \in H_{\tilde{\theta}}(s_{n,+}),$$
and
$s_{n(b^{-\tilde{\theta}}(\tilde{x})+b^{\tilde{\theta}}(\tilde{x})),-}>0$
by
$$ \beta_{x,-}(-s_{n}) \in H_{-\tilde{\theta}}(-nT+s_{n,-}).$$

Notice that $\vert (s_{n,+}+s_{n,-})-nT \vert \leq \epsilon$. The
above definitions and the convexity of the metric yield
$$s_{n}=a_{x,+}+s_{n,+}= a_{x,-}+s_{n,-}.$$
And observe that
$$a_{x,+}=
-b^{\tilde{\theta}}(\beta_{x,+}(a_{x,+}))+b^{\tilde{\theta}}(\tilde{x}),$$
$$a_{x,-}=
-b^{-\tilde{\theta}}(\beta_{x,-}(-a_{x,-}))+b^{-\tilde{\theta}}(T_{\gamma_{\tilde{\theta}}}^{n}(\tilde{x})).$$
Since by definition,

\begin{enumerate}
\item $b^{\tilde{\theta}}(\beta_{x,+}(a_{x,+}))=0,$
\item $b^{-\tilde{\theta}}(\beta_{x,-}(-a_{x,-}))=+nT,$
\item $b^{-\tilde{\theta}}(T_{\gamma_{\tilde{\theta}}}^{n}(\tilde{x}))=b^{-\tilde{\theta}}(\tilde{x})
- nT,$
\end{enumerate}
we have
$$a_{x,+}+a_{x,-} =(b^{-\tilde{\theta}}(\tilde{x})+b^{\tilde{\theta}}(\tilde{x})).$$

Therefore, the length $l(\tilde{\alpha})=2s_{n}+
d(\beta_{x,+}(s_{n}),\beta_{x,-}(-s_{n}))$ satisfies
\begin{eqnarray*}
 2s_{n}  & = &
((a_{x,+}+a_{x,-})+s_{n,+}+s_{n,-} \\
& = & (s_{n,+}+s_{n-}) +
(b^{-\tilde{\theta}}(\tilde{x})+b^{\tilde{\theta}}(\tilde{x}))
\end{eqnarray*}
which yields \begin{eqnarray*} \vert (2s_{n} - nT ) -
(b^{-\tilde{\theta}}(\tilde{x})+b^{\tilde{\theta}}(\tilde{x}))
\vert \\
& = & \vert (s_{n,+}+s_{n,-}) -nT \vert \leq \epsilon .
\end{eqnarray*}

Hence, the length $T_{n}$ of $\tilde{\beta}$ satisfies
$$ \vert (T_{n}-nT) -(b^{-\tilde{\theta}}(\tilde{x})+b^{\tilde{\theta}}(\tilde{x}))  \vert
\leq 2\epsilon. $$

The same estimate holds for the curve $\beta$ that is homotopic to
$n[\gamma]$. To calculate the action of $L(p,v)+c[0](L)=
\frac{1}{2}(v,v) -\omega_{p}(v)+c[0](L)$ in $\beta$, take without
loss of generality $\parallel \gamma \parallel_{st}=1=T$, and
observe that
$$\int_{\beta}\omega = \int_{\gamma} \omega = n.$$
So we have
$$ A_{L+c[0](L)}(\beta)  =  \frac{1}{2}l(\beta) - n  +\frac{1}{2}l(\beta)  = l(\beta) -
n.$$ Thus,

$$\vert A_{L+c[0](L)}(\beta) - ((b^{-\tilde{\theta}}(\tilde{x})+b^{\tilde{\theta}}(\tilde{x}))
\vert \leq 2\epsilon.$$

As $n \rightarrow +\infty$, we can take $\epsilon$ arbitrarily
small, and this implies the Lemma.
\end{proof}

\begin{lemma} \label{tubularminimizers}
There exists a tubular neighborhood $V\subset N(\gamma)$ of $\gamma$ with the following property: Let
$x \in V$, and let $\gamma_{n}$ be the sequence defined in Lemma \ref{PeierlAubry}
of closed geodesic loops based on $x$ such that $h(x,x) = \lim_{n\rightarrow \infty}(A_{L+c_{0}(L)}(\gamma_{n}))$.
Then there exists $m(x)>0$ such that $\gamma_{n} \subset N(\gamma)$ for every $n \geq m(x)$.
\end{lemma}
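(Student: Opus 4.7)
The plan is to combine the upper bound on $h(x,x)$ coming from Lemma \ref{relativepotential} with a shadowing argument forcing the minimizers $\gamma_{n}$ to stay near the support $\gamma$ of the unique Mather measure, and then to project into $\tilde{M}$ and exploit convexity of tubular neighborhoods of geodesics in the Hadamard cover.

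First I would select $V$ as a convex tubular neighborhood of $\gamma$ whose lift $\tilde{V}$ sits inside a convex tubular neighborhood of $\gamma_{\tilde{\theta}}$ contained in $\tilde{N}$. Convexity is automatic in the nonpositively curved Hadamard cover, since the distance function to any complete geodesic is convex. Each $\gamma_{n}$ is a Tonelli minimizer over the fixed time $t_{n}$, hence an orbit of the Euler--Lagrange flow of $L$ and thus a reparametrized $g$-geodesic; because the optimal energy level for the variational problem defining $h(x,x)$ is the critical value $c_{0}(L)=\frac{1}{2}\|\omega\|_{s}^{2}$, the speeds $\|\gamma_{n}'\|$ tend to $\|\omega\|_{s}$.

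The crux is to identify the free homotopy class of $\gamma_{n}$. Using the hypothesis that the Aubry set coincides with the Mather set $\gamma$ and that the Mather measure is unique, together with Mather's graph theorem, one extracts a uniform shadowing principle: for every $\epsilon>0$ there is $T_{0}>0$ such that the tangent lift $(\gamma_{n}(t),\gamma_{n}'(t))$ lies within $\epsilon$ of the periodic orbit projecting to $\gamma$ for all $t\in[T_{0},t_{n}-T_{0}]$ and all sufficiently large $n$. Projecting and using the uniform speed bound, this yields $\gamma_{n}(t)\in N(\gamma)$ on the middle time interval; the boundary arcs $t\in[0,T_{0}]\cup[t_{n}-T_{0},t_{n}]$ stay within distance $\|\omega\|_{s}T_{0}$ of $x$, which lies in $N(\gamma)$ once $V$ has been chosen small enough.

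Having placed $\gamma_{n}$ inside $N(\gamma)$, lift it to $\tilde{\gamma}_{n}$ starting at $\tilde{x}\in\tilde{V}$; continuity of the lift together with the disjointness of the components of $\Pi^{-1}(N(\gamma))$ (guaranteed for $V$ small by the divergence of distinct geodesics in a Hadamard manifold) forces $\tilde{\gamma}_{n}\subset\tilde{N}$. Its endpoint is then a deck translate of $\tilde{x}$ lying inside $\tilde{N}$, which must equal $T_{\gamma_{\tilde{\theta}}}^{k_{n}}(\tilde{x})$ for some integer $k_{n}$; by convexity of $\tilde{N}$ the geodesic segment from $\tilde{x}$ to $T_{\gamma_{\tilde{\theta}}}^{k_{n}}(\tilde{x})$ is contained in $\tilde{N}$, so $\gamma_{n}\subset N(\gamma)$, as required. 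I expect the main obstacle to be the uniform shadowing step: producing the bound $T_{0}=T_{0}(\epsilon)$ independent of $n$ should follow from a compactness argument, in which a hypothetical subsequence of $\gamma_{n}$ with excursions of length greater than $T_{0}$ far from $\gamma$ would yield, via the Arzel\`a--Ascoli theorem, a limiting minimizing orbit of $L$ living outside $\gamma$, contradicting the uniqueness of the static class.
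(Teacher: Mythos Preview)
Your Arzel\`a--Ascoli instinct matches the paper's, but the execution creates a real gap at the shadowing step. The paper negates the conclusion at the level of $V$: if no tubular $V$ works, one gets $x_i\to\gamma$ together with offending loops $\gamma_{n_i}^{i}\not\subset N(\gamma)$. Since $h(x_i,x_i)\to 0$ by continuity, the actions $A_{L+c_0}(\gamma_{n_i}^{i})\to 0$; re-basing each loop at a point $p_i\notin N(\gamma)$ and passing to an accumulation point $q=\lim p_i$, one sees (concatenating with short arcs from $q$ to $p_i$ and back) that $h(q,q)=0$, so $q$ lies in the projected Aubry set $=\gamma$, contradicting $q\notin N(\gamma)$. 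The decisive feature is that the \emph{total actions tend to zero}, which pins every accumulation point of the loops to the Aubry set.

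You instead keep $x$ fixed, so $A_{L+c_0}(\gamma_n)\to h(x,x)>0$, and any re-centered limit $\beta$ is only \emph{semistatic}. Your clause ``a limiting minimizing orbit of $L$ living outside $\gamma$, contradicting the uniqueness of the static class'' is not justified: uniqueness of the static class constrains the Aubry set, not the Ma\~{n}\'{e} set, and semistatic homoclinics to $\gamma$ are not excluded by the hypothesis Aubry $=$ Mather (think of the separatrix of a pendulum). The same concatenation trick applied to your loops only yields $h(\beta(0),\beta(0))\leq h(x,x)$, which is no contradiction. To close this you would have to rule out semistatic homoclinics via the specific geometry, or --- much simpler --- adopt the paper's device of letting $x$ vary toward $\gamma$ so that the actions vanish. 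Two smaller remarks: your $T_0$ must be uniform in $x$ for the subsequent choice of $V$ (which depends on $T_0$) not to be circular; and the final lifting/convexity paragraph is superfluous, since once ``middle near $\gamma$, ends near $x$'' places every point of $\gamma_n$ in $N(\gamma)$ the lemma is already proved.
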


\begin{proof}
This is will follow from the fact that the Aubry set and the Mather set are equal to
the closed geodesic $\gamma$. Indeed, suppose that the statement is not true. Then we can choose a
sequence $x_{i}$ of points converging to $\gamma$ with the following property:

Let $\gamma_{n}^{i}$ be a sequence of closed geodesic loops based on $x_{i}$
with $h(x_{i},x_{i}) = \lim_{n\rightarrow \infty}(A_{L+c_{0}(L)}(\gamma_{n}^{i}))$ (Lemma \ref{PeierlAubry}.
Then there exist a subsequence $\gamma_{n_{i}}^{i}$, $n_{i} \rightarrow \infty$ if $i\rightarrow \infty$,
such that
\begin{enumerate}
\item $\gamma_{n_{i}}^{i}$ is not contained in $N(\gamma)$ for every $i$,
\item $ \lim_{i\rightarrow \infty}\vert h(x_{i},x_{i}) - (A_{L+c_{0}(L)}(\gamma_{n_{i}}^{i}))\vert = 0$.
\end{enumerate}
Since the loops $\gamma_{n_{i}}^{i}$ are minimizers based at $x_{i}$ where $h(x_{i},x_{i})$ tends to zero with
$i\rightarrow \infty$, and their domains tend to ${\mathbb R}$, there exists a
subsequence of them converging to a global minimizer $\beta$. The geodesic $\beta$ would have a point outside
$N(\gamma)$ and, by the continuity of $h(x,x)$, $\beta$ would also contain a point $p$ with $h(q,q)=0$.
By Lemma \ref{PeierlAubry} the point $q$ is in the Aubry set which coincides with the Mather set: the closed
geodesic $\gamma$. This is clearly a contradiction.
\end{proof}

\begin{lemma} \label{uniquemin}
There exists a constant $C>0$ such that if $x \notin N(\gamma)$,
then $h(x,x) \geq C \inf_{p\notin
\tilde{N}}\{b^{-\tilde{\theta}}(p) +
b^{\tilde{\theta}}(p)\}$.
\end{lemma}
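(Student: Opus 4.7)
The plan is to prove Lemma \ref{uniquemin} via a continuity and compactness argument. The quantity on the right-hand side,
$\kappa := \inf_{p\notin \tilde N}\bigl(b^{-\tilde\theta}(p) + b^{\tilde\theta}(p)\bigr)$,
is a fixed positive number depending only on the geometry of the tubular strip $\tilde N$ around $\gamma_{\tilde\theta}$. Thus the inequality $h(x,x) \geq C \kappa$ reduces to establishing a uniform positive lower bound for the function $x \mapsto h(x,x)$ on the compact set $M \setminus N(\gamma)$, which will then be compared with the fixed positive constant $\kappa$.

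First I would verify that $\kappa > 0$. The sum $b^{\tilde\theta}(p) + b^{-\tilde\theta}(p)$ is nonnegative on $\tilde M$ by the remark in Section 5 on Busemann functions in nonpositive curvature, and it vanishes precisely on the lifts of closed geodesics in $[\gamma]$ that are axes of $T_{\gamma_{\tilde\theta}}$. The Aubry--Mather uniqueness hypothesis forces this set to reduce to $\gamma_{\tilde\theta}$ itself. Moreover the sum is $T_{\gamma_{\tilde\theta}}$-invariant, since $b^{\pm\tilde\theta}$ are shifted by $\mp T$ under this covering translation. Consequently it descends to a continuous nonnegative function on the quotient cylinder $\tilde M / \langle T_{\gamma_{\tilde\theta}}\rangle$, on which the image of $\partial \tilde N$ is a compact set disjoint from the image of $\gamma_{\tilde\theta}$. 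The sum therefore attains a strictly positive minimum on that compact image, and convexity of Busemann functions in nonpositive curvature ensures the sum only grows further from $\tilde N$, giving $\kappa > 0$.

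Next I would show $\eta := \inf_{x \notin N(\gamma)} h(x,x) > 0$. By Lemma \ref{PeierlAubry}(1), $h$ is Lipschitz continuous. By Lemma \ref{PeierlAubry}(2), combined with the hypothesis that the Aubry set coincides with the Mather set (the single closed geodesic $\gamma$), we have $h(x,x) > 0$ for every $x \notin \gamma$. Since $M \setminus N(\gamma)$ is closed in the compact manifold $M$ and disjoint from $\gamma$, the continuous positive function $x \mapsto h(x,x)$ attains a strictly positive minimum $\eta$ on it. Setting $C := \eta / \kappa > 0$ yields $h(x,x) \geq \eta = C \kappa$ for every $x \notin N(\gamma)$, as required.

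The main obstacle is confirming $\kappa > 0$; this hinges on the uniqueness assumption that rules out additional axes in the homotopy class $[\gamma]$, together with the $T_{\gamma_{\tilde\theta}}$-invariance that reduces an infimum over the non-compact set $\tilde M \setminus \tilde N$ to one over the compact slice $\partial \tilde N / \langle T_{\gamma_{\tilde\theta}}\rangle$. A more quantitative alternative, which would strengthen the lemma to a pointwise estimate $h(x,x) \geq b^{-\tilde\theta}(\tilde x) + b^{\tilde\theta}(\tilde x)$ at any lift $\tilde x$ of $x$, could be obtained by adapting the broken-geodesic construction of Lemma \ref{relativepotential} to minimizing loops based at $x \notin N(\gamma)$; but the softer compactness route is sufficient for the form in which Lemma \ref{uniquemin} will be used to finish Proposition \ref{barrier}.
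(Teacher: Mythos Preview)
Your proposal is correct and follows essentially the same approach as the paper's own proof: both show that $h(x,x)$ is a continuous function vanishing only on the Aubry set $\gamma$, hence has a positive infimum on the compact complement $M\setminus N(\gamma)$, and both show that the Busemann sum $b^{\tilde\theta}+b^{-\tilde\theta}$ has a positive infimum on $\tilde M\setminus\tilde N$ by combining $T_{\gamma_{\tilde\theta}}$-periodicity (to reduce to a compact fundamental domain) with convexity (to see the infimum is attained on $\partial\tilde N$). Your write-up is in fact more explicit than the paper's---you name the constants $\kappa$ and $\eta$ and set $C=\eta/\kappa$, and you spell out the passage to the quotient cylinder---but the underlying argument is the same compactness-and-positivity comparison.
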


\begin{proof}
Since the function $h(x,x)$ is zero just at the points of the Aubry set (the closed geodesic
$\gamma$) and is continuous, this implies that outside the tubular neighborhood $N(\gamma)$
it must be strictly positive. In the same way, the function $ b^{-\tilde{\theta}}(p) +
b^{\tilde{\theta}}(p)$ is strictly positive outside any tubular
neighborhood of $\gamma_{\tilde{\theta}}$. The periodicity of the Busemann function of $\gamma_{\tilde{\theta}}$,
and the convexity of the metric, imply that the function $d(p)=b^{-\tilde{\theta}}(p) +
b^{\tilde{\theta}}(p)$ is convex in $\tilde{M}$ and attains its minimum value outside $\tilde{N}$ at
the boundary of $\tilde{N}$. This minimum value is positive, so $h(x,x)$ and $d(\tilde{x})$ have analogous
behaviour and the comparison stated in the lemma follows.
\end{proof}
\bigskip

{\bf Proof of Proposition \ref{barrier}}

By Lemma \ref{tubularminimizers}, if we want to estimate $h(x,x)$
at points $x \in V\subset N(\gamma)$ it is enough to consider the loops based at $x$
contained in $N(\gamma)$, where $V$ is the tubular neighborhood of lemma \ref{tubularminimizers}.
Then, lemma \ref{relativepotential} combined with this observation yields item (1). Item (2) follows
from Lemma \ref{uniquemin}.
\bigskip

{\bf Remark}
\bigskip

Proposition \ref{barrier} has some interesting consequences regarding the regularity of the Peierl's barrier
that is closely related to the regularity of the solutions of the
Hamilton-Jacobi equation. The proposition tells us that the function $h(x,x)$ is, in a tubular neighborhood of
$\gamma$, as regular as the Busemann functions of any lift of the geodesic $\gamma$ in $\tilde{M}$.
In particular, if the manifold $(M,g)$ has non-positive curvature, the regularity of the Busemann functions
and Peierl's barrier is $C^{3}$ \cite{E}. The local regularity of Peierl's barrier and subsolutions of the
Hamilton-Jacobi equation close to the support of the Mather measure is quite exceptional, it holds
for instance when the support is a finite collection of closed hyperbolic orbits \cite{Bernard}.

\section{Proof of the main Theorem}
\subsection{Surfaces of revolution as models}

The main idea to show Theorem 1 is to apply our estimates of the
Peierl's barrier in terms of Busemann functions in order to obtain explicit
bounds for the deviation function for surfaces of revolution with
nonpositive curvature. Then, to get bounds for the deviation
function for the surfaces in the statement of Theorem 1 we use
comparison theorems to get information about the asymptotic
behavior of geodesics.

%We refer the reader to Proposition 1.1 in \cite{CR} where, via a
%surface of revolution, a normal form for the surface close by the
%curve $\gamma$ is obtained. We will use bellow the precise
%estimates obtained on this reference.

\bigskip

We choose polynomial functions as test functions to generate
annulus of revolution: If $(x,y,z)$ are the cartesian coordinates
in ${\mathbb R}^ {3}$ let us consider the curves
\[ r(z)= (a+z^{2+k},0,z), \]
where $k \in {\mathbb N}$. (The exponent $2+k$ grants the
existence of Gaussian curvature). We would like to point out that

\begin{itemize}
 \item If the annulus is $C^{\infty}$ then $k$ is even.
\item We can also consider $k \in {\mathbb R}^{+}$, but in this case the annulus might
not be $C^{\infty}$.
\end{itemize}

\subsection{Estimates for Busemann functions in surfaces of revolution}

To estimate the Busemann functions we use the {\bf Clairaut
equation}
\[ r(\gamma(t))cos(\theta(\gamma(t))) = c \]
where $\gamma(t)$ is a geodesic parametrized by arc length, $r(p)$
is the distance from $p$ in the annulus to the revolution axis
$\overrightarrow{z}$, and $\theta(\gamma(t))$ is the angle formed
by the geodesic $\gamma$ and the parallel $z= z(\gamma(t))$ (in
our case, $c=a$).
\bigskip

It follows from a result by P. Eberlein that the Busemann functions in nonpositive curvature are $C^{2}$, so we
can write them as functions of $z$ in the following way (in fact,
by the symmetries of the annulus, it is enough to consider the
generating curve):
\begin{eqnarray*}
u^{+}(z)+u^{-}(z) & = & u^{+}(0)+u^{-}(0) + \int_{0}^ {z}<\nabla u^{+}(t)+ \nabla u^{-}(t),\frac{\delta}{\delta t}>dt \\
 & = & 2\int_{0}^{z} sin(\theta(t)) dt \\
 & = & 2\int_{0}^{z} \sqrt{1-(\frac{a}{r(t)})^{2}}dt \\
& = & 2\int_{0}^{z}
\frac{t^{1+\frac{k}{2}}}{a+t^{2+k}}\sqrt{2a+t{2+k}}dt
\end{eqnarray*}
from which follows the estimate $h(z,z) = Cz^{2+\frac{k}{2}} +
o(z^{3+\frac{k}{2}})$.

\subsection{Comparison theory and estimates for Busemann functions
close to non-hyperbolic geodesics}

The purpose of the subsection is to apply the estimate of the previous subsection to
surfaces of nonpositive curvature $(M,g)$ with $K\leq 0$ like in Theorem 1. Namely,
\begin{enumerate}
 \item There is a closed geodesic $\gamma$ of period $T$ where $K\equiv  0$
whose orbit supports the Mather measure of
$L(p,v)=\frac{1}{2}g_{p}(v,v)-\omega_{p}(v)$.
\item $K<0$ in the complement of $\gamma$.
\item There exists $m>0$ such that for every geodesic
$\beta: (-\epsilon, -\epsilon) \to M$
 perpendicular to $\gamma$ at $\beta(0)=\beta \cap \gamma$
we have that $m$ is the least integer where
 $\frac{\partial^{m}}{dt^{m}} K(\beta(t))\vert_{t=0}\neq 0.$
\end{enumerate}

Such surfaces might not have an annulus of revolution containing the geodesic $\gamma$ as a waist,
so the estimates in the previous subsection might not apply immediately. We shall show that in fact,
the sum of Busemann functions $u^{+}(z)+u^{-}(z)$ is a function of the {\bf angle} formed by the two geodesics
through $z \in \tilde{M}$ which are respectively, forward and backward asymptotic to a lift of $\gamma$.
Then, using CAT comparison theorems we shall be able to compare this function with its counterpart
in the model surface considered in the previous subsection. This will yield a comparison of the sums
of Busemann functions in both surfaces.

To begin with, let us consider a Fermi coordinate system $\Phi:S^{1}_{T}\times
(-\epsilon, \epsilon) \to M$, where $S^{1}_{T}$ is a
circle of length $T$ parametrized by arc length, such that
$\Phi(t,0)=\gamma(t)$ for every $t \in [0,T]$. The curves
$\Phi_{t}(s)=\Phi(t,s)$, $s\in (-\epsilon, \epsilon)$, are
geodesics which are perpendicular to $\gamma$.

Let $A_{b}$ be the annulus of revolution generated by rotating the
curve
\[ r_{b}(z)= (a+bz^{2+k},0,z), \]

around the vertical axis in ${\mathbb R}^{3}$. The meridians of
the annulus of revolution are geodesics, and we can use the same
map $\Phi$ to parametrize a subset of the annulus containing the
waist $\gamma_{0}(t)=R_{t}(r_{b}(0))$, where $R_{t}$ is the
rotation around the $z$-axis of angle $t$ (We can take $a>0$ such
that the length of $\gamma_{0}$ is $T$). Item (3) above implies
that

\begin{lemma}
Given $\sigma>0$ there exist a tubular neighborhood
$V_{\gamma}$ and constants $b_{1}<b_{2}$ with $\vert b_{2}-b_{1}
\vert <\sigma$, such that
$$ K_{b_{1}}(\Phi(p)) \leq K(\Phi(p)) \leq K_{b_{2}}(\Phi(p)) ,$$
where $K_{b}$ is the Gaussian curvature of $A_{b}$.
\end{lemma}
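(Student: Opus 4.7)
The plan is to compare Gaussian curvatures of $M$ and of $A_{b}$ directly, via their Taylor expansions along geodesics perpendicular to the respective waist geodesics ($\gamma$ and the waist of $A_{b}$), and to tune $b_{1}<b_{2}$ so that the leading terms of $K_{b_{1}}$ and $K_{b_{2}}$ straddle that of $K$ with a positive gap that absorbs the higher-order remainders on a sufficiently small tubular neighborhood.

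First, in the Fermi coordinates $(t,s)\in S^{1}_{T}\times(-\epsilon,\epsilon)$ on a tube around $\gamma$, the metric has the form $G(t,s)^{2}\,dt^{2}+ds^{2}$ with $G(t,0)=1$ and $\partial_{s}G(t,0)=0$, so the Gauss curvature is $K=-\partial_{s}^{2}G/G$. Hypothesis (3), compactness of $\gamma$, and $K\leq 0$ yield the uniform expansion
\[ K(\Phi(t,s)) = \frac{c_{m}(t)}{m!}\,s^{m} + o(s^{m}),\]
where $c_{m}(t):=\partial_{s}^{m}K(\Phi(t,0))$ is continuous and $T$-periodic, $m$ is even, and $-c_{m}$ is bounded and bounded away from $0$. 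Parametrizing $A_{b}$ near its waist in the analogous Fermi coordinates (with $a>0$ chosen so that the waist has length $T$ and with $k=m$), the standard surface-of-revolution formula applied to $f(z)=a+bz^{2+m}$, combined with the expansion $s=z+O(z^{2m+3})$ of arc length along a meridian, yields
\[ K_{b}(\Phi(t,s)) = -\frac{b(2+m)(1+m)}{a}\,s^{m} + o(s^{m}),\]
independent of $t$ by rotational symmetry.

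Set $\alpha:=(2+m)(1+m)/a>0$ and $C^{\ast}(t):=-c_{m}(t)/(m!\,\alpha)$, a continuous positive function on the circle. The sandwich $K_{b_{1}}\leq K\leq K_{b_{2}}$ at leading order demands $b_{1}\leq\min_{t}C^{\ast}(t)$ and $b_{2}\geq\max_{t}C^{\ast}(t)$. Given $\sigma>0$, choose $b_{1}<b_{2}$ straddling the interval $[\min_{t}C^{\ast},\max_{t}C^{\ast}]$ with $|b_{2}-b_{1}|<\sigma$; this is possible provided $\sigma$ exceeds the oscillation of $C^{\ast}$, which is automatic when $c_{m}$ is constant along $\gamma$. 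Let $\delta>0$ be the minimum of $C^{\ast}_{\min}-b_{1}$ and $b_{2}-C^{\ast}_{\max}$; by the uniformity of the $o(s^{m})$ remainders, shrink the tubular neighborhood $V_{\gamma}$ so that each remainder is bounded by $\frac{1}{2}\delta\alpha\,s^{m}$ throughout. Since $m$ is even, the resulting inequality propagates to both signs of $s$ and gives $K_{b_{1}}(\Phi(p))\leq K(\Phi(p))\leq K_{b_{2}}(\Phi(p))$ on $V_{\gamma}$.

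The main obstacle is the $t$-dependence of $c_{m}$: the bracketing width $b_{2}-b_{1}$ must be at least the oscillation of $C^{\ast}$ along $\gamma$, so the lemma with arbitrary $\sigma>0$ implicitly requires either that $c_{m}$ be constant along $\gamma$ or that the downstream comparison-theorem application tolerate $\sigma$ bounded below by this oscillation. Beyond this coefficient-matching step, the proof reduces to a routine Taylor expansion together with an elementary shrinking of the neighborhood to absorb the remainder terms.
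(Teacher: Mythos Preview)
Your approach---Taylor-expanding $K$ and $K_{b}$ in the transverse Fermi coordinate $s$, matching the leading $s^{m}$ coefficients, and then shrinking the tube so that the $o(s^{m})$ remainders are absorbed by the coefficient gap---is precisely the paper's strategy, carried out with more care. (The paper's displayed condition (2) in its proof is garbled: it compares $K(\beta(0))$, which vanishes, to the $(m{+}2)$-th derivatives of the $f_{i}$; what is evidently intended is a comparison of the $m$-th normal derivatives of the curvatures at the waist, exactly as you write it.)

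Your final paragraph raises a genuine point that the paper glosses over. If $c_{m}(t)=\partial_{s}^{m}K(\Phi(t,0))$ is not constant along $\gamma$, then any pair $b_{1},b_{2}$ realizing the sandwich must satisfy $b_{2}-b_{1}\geq \max_{t}C^{\ast}(t)-\min_{t}C^{\ast}(t)$, so the clause ``given $\sigma>0$'' cannot hold for arbitrarily small $\sigma$; the lemma as stated is too strong in general. This does not, however, damage Theorem~1: the downstream comparison argument only needs \emph{some} pair $b_{1}<b_{2}$ with the curvature sandwich, and both model annuli $A_{b_{1}}$, $A_{b_{2}}$ yield the same exponent $2+\tfrac{m}{2}$ in the Busemann-function estimate. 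The arbitrariness of $\sigma$ is never actually used.
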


\begin{proof}
From the assumptions on the surface $(M,g)$, the curvature $K$ when
restricted to a geodesic $\beta: (-\epsilon, -\epsilon) \longrightarrow M$
perpendicular to $\gamma$ at $\beta(0)=\beta \cap \gamma$
satisfies
$$\frac{\partial^{m}}{ds^{m}} K(\beta(s))\vert_{s=0}\neq 0.$$
where $m$ is the least integer $k>0$ where the $k$-th derivative of
$K(\beta(s))$ is different from 0. Also from the assumptions, the number
$m$ is  the same for every geodesic $\beta$ as before. So we can find
two functions of the form $f_{1}(s)= a+b_{1}s^{2+m}$, $f_{2}(s)= a+b_{2}s^{2+m}$,
such that
\begin{enumerate}
\item The annulus of revolution $A_{b_{1}}$, $A_{b_{2}}$ generated by
the graphs of $f_{1}$, $f_{2}$ respectively, have waists $\gamma_{1}$, $\gamma_{2}$
of period $2\pi a = T$, where $T$ is the minimum positive period of $\gamma$.
\item $-\frac{\partial^{m+2}}{ds^{m+2}}f_{2}(0) < K(\beta(0)) < -\frac{\partial^{m+2}}{ds^{m+2}}f_{1}(0) < 0$,
for every geodesic $\beta: (-\epsilon, \epsilon)\longrightarrow M$ perpendicular to $\gamma$.
\end{enumerate}
By the theory of surfaces of revolution, the curvatures of $A_{b_{i}}$ at
their waists $\gamma_{i}$ are just the opposites of the second derivatives
of the $f_{i}$ at $t=0$. By continuity, there exists a tubular neighborhood
of $\gamma$ in the parametrization $\Phi$ satisfying the assertion of the lemma.
\end{proof}

The estimates for Busemann functions of $A_{b}$ are completely analogous (up to
multiplication by constants) to the estimates for $A_{1}$ showed
in the previous subsection. Now, we state a version of the comparison theorem for angles of
geodesic triangles, suited for our purposes. It is based in the well known comparison theorems for geodesic triangles
due to A. D. Alexandrov and V. A. Toponogov, which compare the geometry of geodesic triangles in a
Riemannian manifold $(M,g)$ with the geometry of geodesic triangles in spaces of constant curvature
(the book of Cheeger and Ebin \cite{CE} is a great reference for the subject).
We shall use the surfaces of revolution $A_{b}$ described in the previous subsection instead of manifolds
with constant curvature as comparison spaces. Since this is not the usual version of CAT theorems, we
give a proof of the result at the end of the section.

\begin{theorem} \label{CAT} (Comparison theorem for angles)
Let $S= {\mathbb R} \times (-\epsilon, \epsilon)$ be a strip, and let $S_{1}=(S,g_{1})$, $S_{2}=(S,g_{2})$
be two $C^{\infty}$ Riemannian metrics in $S$ such that
\begin{enumerate}
\item The curvatures $K_{1}$, $K_{2}$ of $S_{1}$, $S_{2}$ respectively, satisfy
$K_{2}(t,s) \leq K_{1}(t,s)\leq 0$ for every $(t,s) \in S$.
\item The curves $c(t)= (t,0)$, $t \in {\mathbb R}$, and $\sigma_{t}(s)=(t,s)$,
$s \in (-\epsilon, \epsilon)$, are geodesics in $S_{1}$ and $S_{2}$ for every $t \in {\mathbb R}$.
\item $c(t)$ has unit speed in $S_{1}$ and $S_{2}$, $\sigma_{t}$ has unit speed in $S_{1}$
for every $ t \in {\mathbb R}$, and $\sigma_{0}$ is perpendicular to
$c(t)$ in $S_{1}$ and $S_{2}$.
\end{enumerate}
Let $\Delta_{i}(t,s)$ be the $S_{i}$-geodesic triangles whose common vertices are $c(0), \sigma_{0}(s), c(t)$.
Let $[\sigma_{0}(s), c(t)]_{i}$ be the $S_{i}$-geodesic joining $\sigma_{0}(s)$ and $ c(t)$. Let
$\alpha_{i}(t,s)$ be the $S_{i}$-angle formed by $\sigma_{0}'(s)$ and $[\sigma_{0}(s), c(t)]_{i}$ at
the point $\sigma_{0}(s)$. Then
\begin{enumerate}
\item $\Delta_{2}(t,s) \subset \Delta_{1}(t,s)$ for every $t,s$.
\item We have that $\alpha_{2}(t,s) \leq \alpha_{1}(t,s)$. Moreover, they coincide if and only if
$\Delta_{1}(t,s)=\Delta_{2}(t,s)$ and $K_{1}(p)=K_{2}(p)$ for every $p$ inside $\Delta_{i}(t,s)$.
\end{enumerate}
\end{theorem}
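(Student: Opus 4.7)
The strategy is to reduce the comparison to a Sturm--Picone inequality for the Jacobi equation by exploiting Fermi-type coordinates along $c$ in each metric. First I would observe that the hypotheses --- $c$ and every $\sigma_{t}$ are $g_{i}$-geodesics, $\sigma_{0}\perp c$ in both metrics at the origin, and $c$ has unit speed in both --- are enough, after re-parametrizing each $\sigma_{t}$ by $g_{i}$-arc length (which does not alter the incidence structure of the triangles $\Delta_{i}(t,s)$), to put each metric in the Fermi normal form $g_{i}=ds^{2}+A_{i}(t,s)^{2}\,dt^{2}$ with $A_{i}(t,0)=1$ and $(A_{i})_{s}(t,0)=0$. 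The Gauss identity then gives the familiar Jacobi equation $(A_{i})_{ss}+K_{i}A_{i}=0$ along each $\sigma_{t}$.

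The central analytic step is Sturm--Picone comparison for this ODE. Since $A_{1}(t,\cdot)$ and $A_{2}(t,\cdot)$ share Cauchy data at $s=0$ and $K_{2}\leq K_{1}\leq 0$, one obtains the pointwise domination $A_{2}(t,s)\geq A_{1}(t,s)\geq 1$, with equality only where $K_{1}\equiv K_{2}$ on the corresponding $\sigma_{t}$. Integrated against any absolutely continuous $\beta(u)=(t(u),s(u))$ in the strip, this yields $L_{g_{2}}(\beta)\geq L_{g_{1}}(\beta)$, hence $d_{g_{2}}\geq d_{g_{1}}$; in particular the $g_{2}$-hypotenuse of $\Delta_{2}$ is at least as long as the $g_{1}$-hypotenuse of $\Delta_{1}$.

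For the angle bound $\alpha_{2}\leq\alpha_{1}$ I would use a one-parameter family of unit-speed $g_{i}$-geodesics $\gamma_{\alpha}^{i}$ leaving $\sigma_{0}(s)$ with initial angle $\alpha\in(0,\pi/2)$ to $\sigma_{0}'(s)$. The perpendicular Jacobi field $J_{i}$ generated by varying $\alpha$ satisfies $J_{i}''+K_{i}J_{i}=0$ with the same Cauchy data in both metrics, so Sturm--Picone again gives $|J_{2}|\geq|J_{1}|$. Converting this into a comparison of the first hitting $t$-coordinate of $\gamma_{\alpha}^{i}$ on $c$ yields $\tau_{2}(\alpha)\geq\tau_{1}(\alpha)$, so the unique angle $\alpha_{i}$ with $\tau_{i}(\alpha_{i})=t$ satisfies $\alpha_{2}\leq\alpha_{1}$. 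The containment $\Delta_{2}\subset\Delta_{1}$ is then obtained by observing that convexity of the distance-to-$c$ function in non-positive curvature makes both triangles starlike from the right-angle vertex $(0,0)$, and combining the angle comparison with the length comparison places $[\sigma_{0}(s),c(t)]_{2}$ inside the region enclosed by $c[0,t]\cup\sigma_{0}[0,s]\cup[\sigma_{0}(s),c(t)]_{1}$. The rigidity clause is inherited from Sturm--Picone: equality in the angle bound forces $K_{1}\equiv K_{2}$ along the relevant geodesics, hence $A_{1}\equiv A_{2}$ throughout the common triangle.

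I expect the angle step to be the main obstacle. Classical Rauch is stated as the comparison between a variable-curvature manifold and a constant-curvature model, whereas here one must compare two variable-curvature metrics directly. The required adaptation is conceptually straightforward --- the ODE $J''+KJ=0$ depends only on the curvature along the geodesic --- but technically one must match the corresponding geodesics $\gamma_{\alpha}^{1},\gamma_{\alpha}^{2}$ parameter by parameter via the normal exponential map in the shared Fermi coordinates, a matching that in turn relies on the monotonicity $A_{2}\geq A_{1}$ already established. Threading this matching through the comparison cleanly, and checking that the hitting-time map $\alpha\mapsto\tau_{i}(\alpha)$ inverts in the required way, is where the proof demands the most care.
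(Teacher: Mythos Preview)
Your approach differs substantially from the paper's. You work through Sturm--Picone comparison for the Jacobi equation; the paper argues by contradiction using the Gauss--Bonnet formula for geodesic polygons. Your first analytic step --- putting each metric in Fermi form $g_{i}=ds^{2}+A_{i}^{2}\,dt^{2}$ along $c$ and deducing $A_{2}\geq A_{1}\geq 1$ from $(A_{i})_{ss}+K_{i}A_{i}=0$ with common Cauchy data --- is solid, and the paper in fact uses the very same inequality (stated as a comparison of area Jacobians) as one ingredient of its own argument.

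The gap is precisely where you place it: the angle step. Your plan is to compare Jacobi fields $J_{i}$ along the geodesics $\gamma_{\alpha}^{i}$ issuing from $\sigma_{0}(s)$, but $\gamma_{\alpha}^{1}$ and $\gamma_{\alpha}^{2}$ are \emph{different curves} in the strip $S$. The hypothesis $K_{2}(t,s)\leq K_{1}(t,s)$ is a pointwise inequality in the common coordinates; it does not yield $K_{2}(\gamma_{\alpha}^{2}(r))\leq K_{1}(\gamma_{\alpha}^{1}(r))$ at equal arc-length parameters, which is what Sturm--Picone for $J_{i}''+K_{i}J_{i}=0$ would require. This is exactly what separates the present two-variable-curvature situation from classical Rauch, where one side has constant curvature and the matching is trivial. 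Your proposed fix --- bootstrapping the matching from $A_{2}\geq A_{1}$ --- does not obviously close the gap: the hypotenuse geodesics are not coordinate curves, there is no Clairaut-type first integral in general Fermi coordinates, and a monotone relation between the $(t,s)$-tracks of $\gamma_{\alpha}^{1}$ and $\gamma_{\alpha}^{2}$ is essentially what you are trying to prove.

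The paper avoids this entirely by reversing your order of implications. It proves the containment $\Delta_{2}(t,s)\subset\Delta_{1}(t,s)$ first, by contradiction: assuming the $g_{2}$-hypotenuse exits $\Delta_{1}$, it builds two geodesic quadrilaterals $\Box_{1}\subset\Box_{2}$ (one in each metric, sharing three sides on $c$ and two vertical geodesics) and applies Gauss--Bonnet to each. Subtracting the two identities reduces the question to the sign of $\int_{\Box_{2}}K_{2}\,dA_{2}-\int_{\Box_{1}}K_{1}\,dA_{1}$, and this is controlled using both $K_{2}\leq K_{1}\leq 0$ pointwise \emph{and} the area-form inequality (your $A_{2}\geq A_{1}$). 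The angle inequality $\alpha_{2}\leq\alpha_{1}$ is then an immediate consequence of containment. The virtue of Gauss--Bonnet here is that it is an identity, so one can subtract two instances of it; all curvature comparisons then take place over common regions in $(t,s)$, never along non-coincident geodesics. If you wish to keep a Jacobi-field framework, the cleanest repair is probably to aim for containment first --- e.g.\ via a Riccati comparison for the second fundamental form of the level sets $\{s=\mathrm{const}\}$, which are common to both metrics --- and then read off the angle inequality as a corollary, rather than the other way around.
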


Now, we can apply Theorem \ref{CAT} for angles to bound from above
and from below the distance from $\gamma$ to its asymptotes. Namely,
let us take $\beta$ like before, with $\beta(0)=\gamma(t_{0})$, and
we consider $\beta(a)=\Phi(p)$, $a \in [0,\epsilon)$, $t \neq
t_{0}$. Let $\Delta(p,t)$ be the geodesic triangle in $(M,g)$ with
vertices $\Phi(p)$, $\gamma(t_{0})=\Phi(p_{0})$,
$\gamma(t)=\Phi(p_{t})$; and let $\Delta_{b}(p,t)$ be the geodesic
triangle in $A_{b}$ with the same vertices (let us remind that we
are parametrizing a tubular neighborhood of $\gamma$ and a
subannulus of $A_{b}$ by the map $\Phi$). Then by Theorem \ref{CAT},
if $K_{b}(\Phi(x)) \leq K(\Phi(x))$ for every $x$ in $(-\epsilon,
-\epsilon)\times [0,T]$, we have that $\Delta_{b}(p,t) \subset
\Delta(p,t)$ for every $t \in \mathbb{ R}$. So if we let
$t\rightarrow \infty$, the same property remains true for the ideal
triangles in both annuli with vertices $\Phi(t_{0})$, $\Phi(p)$.

Therefore, the angles formed by $\beta$ and the geodesic
$\gamma_{p}$ through $\Phi(p)$ that is asymptotic to $\gamma$ is at least
the angle formed by $\beta$ and the geodesic $\gamma_{b,p}$ in $A_{b}$
asymptotic to $\gamma_{b}$, whenever $K_{b}(\Phi(x))\leq K(\Phi(x))$ for every
$x$.
\bigskip

This application of comparison theory leads us to a
comparison between Busemann functions in the annuli $A_{b}$, $V_{\gamma}$.

Indeed, let us recall that the formula for the sum of the Busemann
functions in the previous subsection was
$$u^{+}(s)+u^{-}(s) =  u^{+}(0)+u^{-}(0) + \int_{0}^ {s}<\nabla u^{+}(\rho)+ \nabla u^{-}(\rho),\frac{\delta}{\delta
\rho}>d\rho= $$
$$\int_{0}^ {s}<\nabla u^{+}(\rho)+ \nabla
u^{-}(\rho),\frac{\delta}{\delta \rho}>d\rho. $$

The same formula essentially holds for the tubular neighborhood
$V_{\gamma}$ in $(M,g)$, taking $\frac{\delta}{\delta s}$ as the
Fermi coordinate vector field tangent to the geodesics
$\Phi_{t}(s)$ (we assume that the pair
$\gamma'(t),\frac{\delta}{\delta s}$ has the canonical
orientation). Namely, given $t \in [0,T]$ we have
$$u^{+}(\Phi(t,s))+u^{-}(\Phi(t,s)) =  \int_{0}^ {s}<\nabla u^{+}(\Phi(t,\rho))+ \nabla
u^{-}(\Phi(t,\rho)),\frac{\delta}{\delta \rho}>d\rho. $$

So we get
$$u^{+}(\Phi(t,s))+u^{-}(\Phi(t,s)) = \int_{0}^ {s}(\cos(\theta^{+}(t,\rho))) + \cos(\theta^{-}(t,\rho))))dt, $$
where $\theta^{+}(t,s)$ is the angle formed by the forward
asymptote of $\gamma$ containing $\Phi(t,s)$ with the vector
$\frac{\delta}{\delta s}(\Phi(t,s))$ (respectively,
$\theta^{-}(t,s)$ is the angle formed by the backward asymptote of
$\gamma$ and $\frac{\delta}{\delta s}(\Phi(t,s))$).
\bigskip

Let $u_{b}^{+}$, $u_{b}^{-}$ be the Busemann functions in $A_{b}$,
and let $\theta_{b}^{+}(t,s)$, $\theta_{b}^{-}(t,s)$ be the angles formed by
the forward (resp. backward) asymptotes of $\gamma_{0}$ and the
vector field $\frac{\delta}{\delta s}$ at the point $\Phi(t,s)$. By Theorem \ref{CAT} we have
that if $K_{b}(\Phi(t,s))< K(\Phi(t,s))$ then
$$\theta_{b}^{+}(t,s) < \theta^{+}(t,s),$$ and
$$\theta_{b}^{-}(t,s) < \theta^{+}(t,s).$$
Moreover, if $K(\Phi(t,s)) < K_{b}(\Phi(t,s))$ then
$$\theta^{+}(t,s) < \theta_{b}^{+}(t,s),$$
$$\theta^{+}(t,s) < \theta_{b}^{+}(t,s).$$
Replacing these inequalities in the above integrals we get lower
and upper bounds for the sum of the Busemann functions in
$V_{\gamma}$ in terms of the formulae obtained in the previous
subsection. This finishes the proof of Theorem 1.
\bigskip

{\bf Proof of Theorem \ref{CAT}}
\bigskip

Roughly speaking, the less curved is the space the more convex is the distance between geodesics
and narrower are the angles of geodesic triangles. Such features of comparison geometry are well known
since the famous CAT comparison theorems. We shall give an elementary proof using
Gauss-Bonnet theorem for geodesic polygons for the sake of completeness.

First some general remarks about angles and lengths. Angles are conformal invariants, and since
two metrics in an open, simply connected set of the plane are conformally equivalent we have
that the angles with respect to $S_{1}$, $S_{2}$ formed by two vectors $v,w$ tangent to a point in $S$
are the same. Moreover, the $S_{2}$-length of a curve $c\subset S$ is at least its $S_{1}$-length,
because the decrease of the curvature increases the norm of vectors and length.

The Gauss-Bonnet theorem for a geodesic triangle $\Delta$ with inner angles $a, b ,c$ tells us that
$$ (a+b+c) = \int_{\Delta}K   + \pi ,  $$
and the Gauss-Bonnet theorem for geodesic quadrilaterals $\Box$ with inner angles $a, b, c, d$ is
$$ (a+b+c+d) = \int_{\Box}K   + 2\pi .  $$
Now, let us consider $t = -T$, for $T>0$, let us denote $[\sigma_{0}(s), c(-T)]_{i}= \gamma_{i}^{s}$,
and notice that $\gamma_{i}^{s}$ can be parametrized in terms of the parameter $t$ since these geodesics are everywhere
transversal to the vertical geodesics $\sigma_{t}$. So let $\gamma_{i}^{s}(t) = \gamma_{i}^{s} \cap \sigma_{t}$.
The triangles $\Delta_{i}(-T,s)$ have two sides in common, the geodesics $c(t), t\in [-T,0]$, and $\sigma_{0}(r)$,
$r \in [0,s]$. The geodesics $\gamma_{i}^{s}$ are the other sides of $\Delta_{i}(t,s)$, and we claim
\bigskip

{\bf Claim 1:} $\gamma_{2}^{s}$ is contained in $\Delta_{1}(-T,s)$.
\bigskip

For suppose that $\gamma_{2}^{s}$ is not contained in $\Delta_{1}(-T,s)$. Then there exist $t_{0} \in (-T,0]$,
$s_{0} \in (0,s)$ such that
\begin{enumerate}
\item $\sigma_{t_{0}}(s_{0}) \in \gamma_{1}^{s}\cap \gamma_{2}^{s}$,
\item the angle formed by $\frac{d}{dt}\gamma_{2}^{s}$ and $\sigma_{t_{0}}'(s_{0})$ is at least
the angle formed by $\frac{d}{dt}\gamma_{1}^{s}$ and $\sigma_{t_{0}}'(s_{0})$.
\end{enumerate}
Since $\gamma_{i}^{s}(-T)= c(-T)$ there exists a minimum value $r_{0} <t_{0}$ such that
\begin{enumerate}
\item $\gamma_{2}^{s}(t)$ is not in $\Delta_{1}(-T,s)$ for every $t \in (r_{0},t_{0})$,
\item $\gamma_{2}^{s}(r_{0}) = \gamma_{1}^{s}(r_{0})$.
\end{enumerate}
Then, there exists $z >s$ such that the geodesic $\gamma_{1}^{z}$ is tangent to $\gamma_{2}^{s}$ at
some point $\gamma_{1}^{z}(t_{1})$, where $t_{1} \in (r_{0},t_{0})$. Let $s_{1}<s$ be such that
$\sigma_{t_{1}}(s_{1}) = \gamma_{1}^{s}(t_{1})$. Let $z_{1}>s_{1}$ be such that
$\gamma_{2}^{s}(t_{1})= \sigma_{t_{1}}(z_{1})$.
Let us consider the geodesic quadrilaterals $\Box_{1}$, $\Box_{2}$
with the following geodesic sides and angles:
\begin{enumerate}
\item The sides of $\Box_{1}$ are the $S_{1}$-geodesics
$c(t), t \in [t_{1},t_{0}]$, $\sigma_{t_{1}}([0,s_{1}])$,
$\sigma_{t_{0}}([0,s_{0}])$ and $\gamma_{1}^{s}([t_{1},t_{0}])$. The inner angles are
$a_{1}$, the angle formed by $c(t)$ and $\sigma_{t_{1}}$; $a_{2}$, the angle formed
by $c(t)$ and $\sigma_{t_{0}}$; $a_{3}$, the angle formed by $\sigma_{t_{1}}$ and
$\gamma_{1}^{s}$; and $a_{4}$ the angle formed by $\sigma_{t_{0}}$ and $\gamma_{1}^{s}$.
\item The sides of $\Box_{2}$ are the $S_{2}$-geodesics $c(t), t \in [t_{1},t_{0}]$, $\sigma_{t_{1}}([0,z_{1}])$,
$\sigma_{t_{0}}([0,s_{0}])$ and $\gamma_{2}^{s}([t_{1},t_{0}])$. The inner angles are $b_{j}$, $j=1,2,3,4$, where
$b_{1}$ is the angle formed by $c(t)$ and $\sigma_{t_{1}}$; $b_{2}$, the angle formed
by $c(t)$ and $\sigma_{t_{0}}$; $b_{3}$, the angle formed by $\sigma_{t_{1}}$ and
$\gamma_{2}^{s}$; and $b_{4}$ the angle formed by $\sigma_{t_{0}}$ and $\gamma_{2}^{s}$.
\end{enumerate}
Observe that $\Box_{1} \subset \Box_{2}$, and that $a_{1}=a_{2}=b_{1}=b_{2}=\frac{\pi}{2}$.
So the sum of the inner angles of $\Box_{1}$ is
$$\pi + (a_{3} + a_{4}).$$
The sum of the inner angles of $\Box_{2}$ is
$$ \pi + (b_{3}+b_{4}).$$

Moreover, since $\gamma_{1}^{z}$ is tangent to $\gamma_{2}^{s}$ at
$\gamma_{1}^{z}(t_{1}) = \sigma_{t_{1}}(z_{1})$, and $z>s$, we have that
$$b_{3} \geq a_{3}. $$
By Gauss-Bonnet we get that
$$ \sum_{j=1}^{4}b_{j} - \sum_{j=1}^{4}a_{j} = (b_{3} - a_{3}) + (b_{4} - a_{4}) = \int_{\Box_{2}}K_{2} - \int_{\Box_{1}} K_{1} .$$
This implies that
\begin{eqnarray*}
 (b_{4} - a_{4}) & = & \int_{\Box_{2}}K_{2} - \int_{\Box_{1}} K_{1} - (b_{3} - a_{3})\\
 & \leq & \int_{\Box_{2}}K_{2} - \int_{\Box_{1}} K_{1}.
\end{eqnarray*}

Since, by assumption, the tangent vector of $\gamma_{2}^{s}$ at $\gamma_{2}^{s}(t_{0})$ points into $\Delta_{1}(t,s)$,
we know that $b_{4} \geq  a_{4}$, or $b_{4}-a_{4} \geq 0$.
So we would have that the difference of the integrals in the right is positive. However,
\bigskip

{\bf Claim 2:} The number $\int_{\Box_{2}}K_{2} - \int_{\Box_{1}} K_{1}$ is negative.
\bigskip

In fact, we have
$$\int_{\Box_{2}}K_{2} - \int_{\Box_{1}} K_{1} = \int_{\Box_{1}}K_{2} - \int_{\Box_{1}} K_{1} - \int_{\Box_{2}-\Box_{1}} K_{2} .$$
So
$$\int_{\Box_{2}}K_{2} - \int_{\Box_{1}} K_{1} \leq  \int_{\Box_{1}}K_{2} - \int_{\Box_{1}} K_{1}.$$
The integral of $K_{i}$ is calculated in terms of the area form of $S_{i}$. If we put both integrals in terms
of $dtds$, the coordinates of the strip $S$, we get
$$ \int_{\Box_{1}} K_{1} = \int_{\Box_{1}} J_{1}(t,s)K_{1}(t,s)dtds ,$$
$$ \int_{\Box_{1}} K_{2} = \int_{\Box_{1}} J_{2}(t,s)K_{2}(t,s)dtds ,$$
where $J_{i}(t,s)$ is the Jacobian of the coordinate change of the $S_{i}$-area form. Since $K_{2}(t,s) \leq K_{1}(t,s)$
the Jacobians satisfy $J_{2}(t,s)\geq J_{1}(t,s)$. Thus, $\int_{\Box_{1}}K_{2} - \int_{\Box_{1}} K_{1} \leq 0$ and the claim
is proved.
\bigskip

The Claim contradicts $b_{4}-a_{4}\geq 0$, which was a consequence of the assuming that $\gamma_{2}^{s}$ is
not contained in the triangle $\Delta_{1}(-T,s)$. This shows Claim 1.

Theorem \ref{CAT} follows from Claim 1 (that is item (1) in the statement), since $\Delta_{2}(-T,s) \subset \Delta_{1}(-T,s)$
obviously implies that the inner angles of $\Delta_{2}(-T,s)$ are bounded above by the inner angles of $\Delta_{1}(-T,s)$.
Gauss-Bonnet implies metric rigidity if the two triangles coincide. The argument for $T<0$ is completely analogous.

\end{document}